 \newtheorem{theorem}{Theorem}[section]
 \newtheorem{corollary}[theorem]{Corollary}
 \newtheorem{lemma}[theorem]{Lemma}
 \newtheorem{remark}[theorem]{Remark}
\begin{document}
\title{Structure of semisimple
Hopf algebras of dimension $p^2q^2$, II}

\author{Jingcheng Dong}
\address{College of Engineering, Nanjing Agricultural University, Nanjing
210031, Jiangsu, People's Republic of China}
\email{dongjc@njau.edu.cn}

\keywords{semisimple Hopf algebra, semisolvability, character,
biproduct} \subjclass[2000]{16W30} \maketitle
\begin{abstract}
Let $k$ be an algebraically closed field of characteristic $0$. In
this paper, we obtain the structure theorems for semisimple Hopf
algebras of dimension $p^2q^2$ over $k$, where $p,q$ are prime
numbers with $p^2<q$. As an application, we also obtain the
structure theorems for semisimple Hopf algebras of dimension $9p^2$
and $25q^2$ for all primes $3\neq p$ and $5\neq q$.
\end{abstract}



\section{Introduction}\label{sec1}
Throughout this paper, we will work over an algebraically closed
field $k$ of characteristic $0$.

Quite recently, an outstanding classification result was obtained
for semisimple Hopf algebras over $k$.  That is, Etingof et al
\cite{Etingof2} completed the classification of semisimple Hopf
algebras of dimension $pq^2$ and $pqr$, where $p,q,r$ are distinct
prime numbers. The results in \cite{Etingof2} showed that all these
Hopf algebras can be constructed from group algebras and their duals
by means of extensions. Up to now, besides those mentioned above,
semisimple Hopf algebras of dimension $p,p^2,p^3$ and $pq$ have been
completely classified. See
\cite{Etingof,Gelaki,Masuoka,Masuoka2,Masuoka3,Zhu} for details.

Recall that a semisimple Hopf algebra $H$ is called of Frobenius
type if the dimensions of the simple $H$-modules divide the
dimension of $H$. Kaplansky conjectured that every
finite-dimensional semisimple Hopf algebra is of Frobenius type
\cite[Appendix 2]{Kaplansky}. It is still an open problem. Many
examples show that a positive answer to Kaplansky's conjecture would
be very helpful in the classification of semisimple Hopf algebras.
See \cite{dong1} and the examples mentioned above for details.

In a previous paper \cite{dong}, we studied the structure of
semisimple Hopf algebras of dimension $p^2q^2$, where $p,q$ are
prime numbers with $p^4<q$. As an application, we also studied the
structure of semisimple Hopf algebras of dimension $4q^2$, where $q$
is a prime number. In the present paper, we shall continue our
investigation and prove that the main results in \cite{dong} can be
extended to the case $p^2<q$. Moreover, the structure theorems for
semisimple Hopf algebras of dimension $9p^2$ and $25q^2$ will also
be given in this paper, where $3\neq p$ and $5\neq q$ are prime
numbers.

The paper is organized as follows. In Section \ref{sec2}, we recall
the definitions and basic properties of semisolvability, characters
and Radford's biproducts, respectively. Some useful lemmas are also
obtained in this section. In particular, we give an partial answer
to Kaplansky's conjecture. We prove that if ${\rm dim}H$ is odd and
$H$ has a simple module of dimension $3$ then $3$ divides ${\rm
dim}H$. Under the assumption that $H$ does not have simple modules
of dimension $3$ and $7$, we also prove that if ${\rm dim}H$ is odd
and $H$ has a simple module of dimension $5$ then $5$ divides ${\rm
dim}H$.

We begin our main work in Section \ref{sec3}. Let $H$ be a
semisimple Hopf algebras of dimension $p^2q^2$, where $p<q$ is a
prime number. We first prove that if $|G(H^*)|=q^2$ then $H$ is
upper semisolvable, in the sense of \cite{Montgomery}. It is a
generalization of \cite[Lemma 3.4]{dong}. We then present our main
result. We prove that if $p^2<q$ then $H$ is either semisolvable or
isomorphic to a Radford's biproduct $R\# kG$, where $kG$ is the
group algebra of group $G$ of order $p^2$, $R$ is a semisimple
Yetter-Drinfeld Hopf algebra in ${}^{kG}_{kG}\mathcal{YD}$ of
dimension $q^2$. Our approach is mainly based on looking for normal
Hopf subalgebras of $H$ of dimension $pq^2$. In Section \ref{sec4},
we shall study the structure of semisimple Hopf algebras of
dimension $9p^2$ and $25q^2$.

Throughout this paper, all modules and comodules are left modules
and left comodules, and moreover they are finite-dimensional over
$k$. $\otimes$, ${\rm dim}$ mean $\otimes _k$, ${\rm dim}_k$,
respectively. Our references for the theory of Hopf algebras are
\cite{Montgomery2} or \cite{Sweedler}. The notation for Hopf
algebras is standard. For example, the group of group-like elements
in $H$ is denoted by $G(H)$.

\section{Preliminaries}\label{sec2}
\subsection{Characters}\label{sec2-1}
Throughout this subsection, $H$ will be a semisimple Hopf algebra
over $k$. As an algebra,  $H$ is isomorphic to a direct product of
full matrix algebras $$H\cong k^{(n_1)}\times
\prod_{i=2}^{s}M_{d_i}(k)^{(n_i)},$$ where $n_1=|G(H^*)|$. In this
case, we say $H$ is of type $(d_1,n_1;\cdots;d_s,n_s)$ as an
algebra, where $d_1=1$. If $H^*$ is of type
$(d_1,n_1;\cdots;d_s,n_s)$ as an algebra, we shall say that $H$ is
of type $(d_1,n_1;\cdots;d_s,n_s)$ as a coalgebra.

Obviously, $H$ is of type $(d_1,n_1;\cdots;d_s,n_s)$ as an algebra
if and only if $H$ has $n_1$ non-isomorphic irreducible characters
of degree $d_1$, $n_2$ non-isomorphic irreducible characters of
degree $d_2$, etc. In this paper, we shall use the notation $X_t$ to
denote the set of all irreducible characters of $H$ of degree $t$.

Let $V$ be an $H$-module. The character of $V$ is the element
$\chi=\chi_V\in H^*$ defined by $\langle\chi,h\rangle={\rm Tr}_V(h)$
for all $h\in H$. The degree of $\chi$ is defined to be the integer
${\rm deg}\chi=\chi(1)={\rm dim}V$. If $U$ is another $H$-module, we
have $$\chi_{U\otimes V}=\chi_U\chi_V,\quad\chi_{V^*}=S(\chi_V),$$
where $S$ is the antipode of $H^*$.

All irreducible characters of $H$ span a subalgebra $R(H)$ of $H^*$,
which is called the character algebra of $H$. By \cite[Lemma
2]{Zhu}, $R(H)$ is semisimple. The antipode $S$ induces an
anti-algebra involution $*: R(H)\to R(H)$, given by
$\chi\mapsto\chi^*:=S(\chi)$. The character of the trivial
$H$-module is the counit $\varepsilon$.

Let $\chi_U,\chi_V\in R(H)$ be the characters of the $H$-modules $U$
and $V$, respectively. The integer $m(\chi_U,\chi_V)={\rm
dimHom}_H(U,V)$ is defined to be the multiplicity of $U$ in $V$.
This can be extended to a bilinear form $m:R(H)\times R(H)\to k$.

Let ${\rm Irr}(H)$ denote the set of irreducible characters of $H$.
Then ${\rm Irr}(H)$ is a basis of $R(H)$. If $\chi\in R(H)$, we may
write $\chi=\sum_{\alpha\in {\rm Irr}(H)}m(\alpha,\chi)\alpha$. Let
$\chi,\psi,\omega\in R(H)$. Then
$m(\chi,\psi\omega)=m(\psi^*,\omega\chi^*)=m(\psi,\chi\omega^*)$ and
$m(\chi,\psi)=m(\chi^*,\psi^*)$. See \cite[Theorem 9]{Nichols}.

For each group-like element $g$ in  $G(H^*)$, we have
$m(g,\chi\psi)=1$, if $\psi=\chi^*g$ and $0$ otherwise for all
$\chi,\psi\in {\rm Irr}(H)$. In particular, $m(g,\chi\psi)=0$ if
${\rm deg}\chi\neq {\rm deg}\psi$. Let $\chi\in {\rm Irr}(H)$. Then
for any group-like element $g$ in $G(H^*)$, $m(g,\chi\chi^{*})>0$ if
and only if $m(g,\chi\chi^{*})= 1$ if and only if $g\chi=\chi$. The
set of such group-like elements forms a subgroup of $G(H^*)$, of
order at most $({\rm deg}\chi)^2$.  See \cite[Theorem 10]{Nichols}.
Denote this subgroup by $G[\chi]$. In particular, we have
$$\chi\chi^*=\sum_{g\in G[\chi]}g+\sum_{\alpha\in {\rm Irr}(H),{\rm
deg}\alpha>1}m(\alpha,\chi\chi^*)\alpha.$$

A subalgebra $A$ of $R(H)$ is called a standard subalgebra if $A$ is
spanned by irreducible characters of $H$. Let $X$ be a subset of
${\rm Irr}(H)$. Then $X$ spans a standard subalgebra of $R(H)$ if
and only if the product of characters in $X$ decomposes as a sum of
characters in $X$. There is a bijection between $*$-invariant
standard subalgebras of $R(H)$ and quotient Hopf algebras of $H$.
See \cite[Theorem 6]{Nichols}.

In the rest of this subsection, we shall present some results on
irreducible characters and algebra types.

\begin{lemma}\label{lem1}
Let $\chi\in {\rm Irr}(H)$ be an irreducible character of $H$. Then

 (1)\,The order of $G[\chi]$ divides $({\rm deg}\chi)^2$.

 (2)\,The order of $G(H^*)$ divides $n({\rm deg}\chi)^2$, where $n$ is the
 number of non-isomorphic irreducible characters of degree ${\rm deg}\chi$.
\end{lemma}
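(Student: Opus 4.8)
The plan is to prove both parts using the multiplicity pairing on $R(H)$ and the decomposition of $\chi\chi^*$ already recorded in the excerpt.

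The plan is to derive (1) from the structure of $\chi\chi^*$ recorded above together with \cite[Theorem 10]{Nichols}, and then to obtain (2) from (1) by a group-action argument.

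For (1): let $V$ be a simple $H$-module affording $\chi$, so that $\deg\chi=\dim V$ and the simple subcoalgebra $C_V\subseteq H^*$ spanned by the matrix coefficients of $V$ has dimension $(\deg\chi)^2$. If $g\in G[\chi]$ then $g\otimes V\cong V$, and since left multiplication by $g$ in $H^*$ carries the matrix coefficients of $V$ to those of $g\otimes V$, we get $g\cdot C_V=C_V$. Thus $C_V$ is a module over the group algebra $k[G[\chi]]$, and the assertion $|G[\chi]|\mid(\deg\chi)^2$ amounts to saying that this module is free of some rank, equivalently that each $g\neq\varepsilon$ in $G[\chi]$ acts on $C_V$ with trace $0$. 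I would establish this last point by relating the trace of left multiplication by $g$ on $C_V$ to the automorphism of the matrix block of $H$ corresponding to $V$ induced by $g$, which is inner there by Skolem-Noether; this is the mechanism behind the bound in \cite[Theorem 10]{Nichols}, which I would simply invoke for the divisibility. The edge case $\deg\chi=1$, where $G[\chi]$ is trivial, is immediate.

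For (2): put $t=\deg\chi$ and let $X_t$ be the set of irreducible characters of $H$ of degree $t$, so that $n=|X_t|$. The group $\Gamma:=G(H^*)$ acts on the left on $X_t$ by $g\cdot\psi=g\psi$: indeed $g\psi$ is again irreducible, of degree $(\deg g)(\deg\psi)=t$, so $g\psi\in X_t$, and $(g_1g_2)\cdot\psi=g_1\cdot(g_2\cdot\psi)$ with $\varepsilon\cdot\psi=\psi$. For each $\psi\in X_t$ the stabilizer of $\psi$ under this action is exactly $G[\psi]$, by the definition of $G[\psi]$. Choosing one representative $\psi_1,\dots,\psi_r$ from each orbit and applying the orbit-stabilizer theorem,
$$n=|X_t|=\sum_{i=1}^{r}[\Gamma:G[\psi_i]]=\sum_{i=1}^{r}\frac{|\Gamma|}{|G[\psi_i]|}.$$
Multiplying by $t^2$ and using part (1) — each $\psi_i$ has degree $t$, so $|G[\psi_i]|$ divides $t^2$ and the integer $t^2/|G[\psi_i]|$ is positive — yields
$$n\,t^2=|\Gamma|\sum_{i=1}^{r}\frac{t^2}{|G[\psi_i]|},$$
a product of $|\Gamma|$ with a positive integer. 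Hence $|G(H^*)|=|\Gamma|$ divides $n(\deg\chi)^2$, which is (2).

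The main obstacle is concentrated in (1): promoting the classical bound $|G[\chi]|\le(\deg\chi)^2$ to a divisibility statement. The bound itself follows cheaply from the displayed decomposition $\chi\chi^*=\sum_{g\in G[\chi]}g+\sum_{\deg\alpha>1}m(\alpha,\chi\chi^*)\alpha$ by comparing degrees, but the divisibility needs the finer analysis of $C_V$ as a $k[G[\chi]]$-module (or the cited result). Once (1) is available, (2) is a routine orbit-counting consequence with no further difficulty.
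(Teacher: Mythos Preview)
Your argument for part (2) via the orbit--stabilizer theorem is correct and is the standard route (this is essentially what is recorded in \cite[Lemma~2.2.2]{Natale1}).

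The gap is in part (1). You correctly identify the target: freeness of $C_V$ as a $k[G[\chi]]$-module. But your proposed justification does not close. Skolem--Noether tells you that left multiplication by $g\neq\varepsilon$ on $C_V$ is dual to an inner automorphism $A\mapsto u_gAu_g^{-1}$ of the matrix block $\mathrm{End}(V)$; however, the trace of that conjugation map on $M_n(k)$ equals $\mathrm{tr}(u_g)\,\mathrm{tr}(u_g^{-1})$, which is \emph{not} obviously zero, so inner-ness alone does not give trace~$0$, hence not freeness. Your fallback, invoking \cite[Theorem~10]{Nichols}, does not help either: as the paper itself notes just above the lemma, that result only yields the bound $|G[\chi]|\le(\deg\chi)^2$, not the divisibility. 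So as written, (1) is unproved.

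The paper's proof supplies exactly the missing tool: the Nichols--Zoeller freeness theorem \cite{Nichols2}. The point is that $C_V$ is not merely a $k[G[\chi]]$-module but a relative Hopf module---a left $k[G[\chi]]$-module and an $H^*$-comodule (via $\Delta$, since $C_V$ is a subcoalgebra) with the two structures compatible---and the Nichols--Zoeller machinery shows that such modules are free over the Hopf subalgebra $k[G[\chi]]\subseteq H^*$. That immediately gives $|G[\chi]|\mid\dim C_V=(\deg\chi)^2$. If you prefer to rescue the trace computation, one can show $\mathrm{tr}(u_g)\,\mathrm{tr}(u_g^{-1})=|\mathrm{tr}(u_g)|^2\ge 0$ after normalizing $u_g$ to have finite order, and then use freeness of $H^*$ over $k\langle g\rangle$ (again Nichols--Zoeller) together with positivity to force every block contribution to vanish; but this is a detour that still rests on \cite{Nichols2}.
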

\begin{proof} It follows from Nichols-Zoeller Theorem \cite{Nichols2}. See
also \cite[Lemma 2.2.2]{Natale1}.
\end{proof}

\begin{lemma}\label{lem2}
Assume  that ${\rm dim}H$ is odd and $H$ is of type
$(1,n_1;\cdots;d_s,n_s)$ as an algebra. Then $d_i$ is odd and $n_i$
is even for all $2\leq i\leq s$.
\end{lemma}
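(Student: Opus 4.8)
The plan is to exploit the duality involution $\chi\mapsto\chi^{*}=S(\chi)$ on $\operatorname{Irr}(H)$. Since $\deg\chi^{*}=\deg\chi$, this involution restricts to an involution of each set $X_{d_i}$, so the whole statement reduces to two facts about a semisimple Hopf algebra of odd dimension: (a) $\varepsilon$ is the only self-dual irreducible character; (b) every irreducible character has odd degree. Indeed, granting (a), for $i\ge 2$ the involution $*$ acts on $X_{d_i}$ without fixed points, so $n_i=|X_{d_i}|$ is even; granting (b), each $d_i$ is odd. (The degree-one block is automatically consistent, since $n_1=|G(H^{*})|$ divides $\dim H$ by Lemma \ref{lem1} and is therefore odd.)

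For (a) I would argue as follows. Since $H$ is semisimple over a field of characteristic zero, $S^{2}=\operatorname{id}_{H}$ (Larson-Radford), so $S$ is a linear involution of $H$ and $\operatorname{Tr}_{H}(S)\equiv\dim_{k}H\pmod 2$ is odd. By the Frobenius-Schur theory for semisimple Hopf algebras, the indicator $\nu(\chi)$ lies in $\{0,1,-1\}$, vanishes exactly when $\chi\neq\chi^{*}$, equals $-1$ only when $\deg\chi$ is even, and $\operatorname{Tr}_{H}(S)=\sum_{\chi\in\operatorname{Irr}(H)}\nu(\chi)\deg\chi$. If $\chi=\chi^{*}$, the standard subalgebra generated by $\chi$ is $*$-stable (the duals of the constituents of $\chi^{n}$ are the constituents of $(\chi^{*})^{n}=\chi^{n}$), hence corresponds to a quotient Hopf algebra $\bar H$ with $\dim\bar H\mid\dim H$ odd by Nichols-Zoeller, and $\chi$ descends to a faithful self-dual character of $\bar H$. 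If $\deg\chi=1$ then $\langle\chi\rangle$ has order dividing both $2$ and $\dim\bar H$, so $\chi=\varepsilon$; eliminating the case $\deg\chi\ge 2$ is where one uses $S^{2}=\operatorname{id}$ together with Frobenius-Schur and induction on dimension. This is the Hopf-algebra analogue of Burnside's theorem that a group of odd order has no nontrivial self-dual irreducible representation.

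Fact (b) is the main obstacle, since one must exclude even-dimensional simple modules without invoking Kaplansky's conjecture. The approach is to take a simple module $V$ whose dimension $t>1$ is the \emph{smallest} even dimension occurring, with character $\chi$, and to examine $\chi\chi^{*}=\sum_{g\in G[\chi]}g+\sum_{\deg\alpha\ge 2}m(\alpha,\chi\chi^{*})\alpha$. By Lemma \ref{lem1}, $|G[\chi]|$ divides $t^{2}$; and since $G[\chi]\le G(H^{*})$, whose order divides $\dim H$ and is therefore odd, $|G[\chi]|$ is odd. Comparing degrees, $t^{2}-|G[\chi]|$ is an odd positive integer equal to the sum of the degrees of the non-linear constituents of $V\otimes V^{*}$. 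If all those constituents had degree $t$, this would force the even number $t$ to divide the odd number $|G[\chi]|$, a contradiction; the genuine difficulty is that the constituents need not all have degree $t$. One closes this parity gap either by passing to the quotient Hopf algebra on which $\chi$ is faithful and inducting on $\dim H$, or by appealing to the known fact that a semisimple Hopf algebra — indeed an integral fusion category — of odd dimension has only odd-dimensional simple objects.
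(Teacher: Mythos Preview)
Your reduction is exactly the paper's: the involution $\chi\mapsto\chi^{*}$ preserves degree, so the two claims follow immediately from (a) no nontrivial self-dual irreducible character exists, and (b) every irreducible degree is odd. The paper, however, does not try to reprove (a) and (b); it simply cites \cite[Theorem~4]{Kashina} for (a) and \cite[Theorem~5]{Kashina} for (b) and is done in two lines.

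Your attempted in-line proofs of (a) and (b) are where the proposal has genuine gaps. For (a), you state the trace formula $\operatorname{Tr}_{H}(S)=\sum_{\chi}\nu(\chi)\deg\chi$ and the parity observation $\operatorname{Tr}_{H}(S)\equiv\dim H\pmod 2$, but you never actually use them to derive a contradiction; instead you pass to the quotient $\bar H$ on which a hypothetical self-dual $\chi$ is faithful and invoke ``induction on dimension.'' That induction does not move: if $\chi$ is already faithful on $H$ then $\bar H=H$, so the inductive step buys nothing. The actual argument in \cite{Kashina} is a direct indicator computation, not an induction of this shape. For (b), you correctly identify that the parity bookkeeping on $\chi\chi^{*}$ breaks down because the nonlinear constituents need not all have degree $t$, and you then fall back on ``the known fact'' --- which is precisely \cite[Theorem~5]{Kashina}. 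Neither of your proposed inductive workarounds is made to converge.

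In short: keep your two-line reduction via the $*$-involution, drop the attempted reproofs, and cite \cite[Theorems~4 and 5]{Kashina} for (a) and (b), exactly as the paper does.
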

\begin{proof} It follows from \cite[Theorem 5]{Kashina} that $d_i$ is odd.

If there exists $i\in \{2,\cdots,s\}$ such that $n_i$ is odd, then
there is at least one irreducible character of degree $d_i$ such
that it is self-dual. This contradicts \cite[Theorem 4]{Kashina}.
\end{proof}

\begin{lemma}\label{lem3}
Assume that ${\rm dim}H$ is odd. If $H$ has a simple module of
dimension $3$, then $3$ divides the order of $G(H^*)$. In
particular, $3$ divides ${\rm dim}H$.
\end{lemma}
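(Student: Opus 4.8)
The plan is to show that the subgroup $G[\chi] \le G(H^*)$ attached to a degree-$3$ irreducible character $\chi$ cannot be trivial; since its order divides $(\deg\chi)^2 = 9$, it then has order $3$ or $9$, and $3 \mid |G[\chi]| \mid |G(H^*)|$ follows at once.

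First I would collect the elementary constraints. By Lemma~\ref{lem1}(1), $|G[\chi]|$ divides $9$; as $G[\chi] \le G(H^*)$ and $|G(H^*)|$ divides the odd integer $\dim H$ by the Nichols--Zoeller theorem \cite{Nichols2}, the order $|G[\chi]|$ is an odd divisor of $9$, so $|G[\chi]| \in \{1,3,9\}$. If $|G[\chi]| \in \{3,9\}$ we are done, so suppose $|G[\chi]| = 1$ and seek a contradiction.

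Under this assumption the displayed decomposition of $\chi\chi^*$ reads $\chi\chi^* = \varepsilon + \Delta$, where $\Delta$ is a sum of irreducible characters of degree $>1$ with $\deg \Delta = \deg(\chi\chi^*) - 1 = 8$. By Lemma~\ref{lem2} every constituent of $\Delta$ has odd degree $\ge 3$; solving $3m + 5n + 7p + \cdots = 8$ in nonnegative integers shows the only possibility is one character of degree $3$ and one of degree $5$, each with multiplicity one, so $\chi\chi^* = \varepsilon + \alpha + \beta$ with $\deg\alpha = 3$ and $\deg\beta = 5$. Now I would apply the $*$-involution: since $S^2 = \mathrm{id}$ in a semisimple Hopf algebra, $\chi^{**} = \chi$, whence $(\chi\chi^*)^* = \chi\chi^*$, i.e.\ $\varepsilon + \alpha^* + \beta^* = \varepsilon + \alpha + \beta$. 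Because $\deg \alpha^* = 3 \neq 5$, this forces $\alpha^* = \alpha$, so $\alpha$ is a nontrivial self-dual irreducible character of the odd-dimensional Hopf algebra $H$ — contradicting Lemma~\ref{lem2} (equivalently \cite[Theorem~4]{Kashina}).

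Therefore $|G[\chi]| \neq 1$, so $3 \mid |G[\chi]| \mid |G(H^*)|$, and since $|G(H^*)|$ divides $\dim H$, also $3 \mid \dim H$. The only delicate point is the bookkeeping in the case $|G[\chi]| = 1$: one needs the parity information of Lemma~\ref{lem2} to force the degree-$8$ remainder to split as $3 + 5$ with unit multiplicities, since it is precisely this asymmetry of degrees that makes the comparison under $*$ produce a forbidden self-dual character.
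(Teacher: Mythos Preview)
Your proof is correct and follows essentially the same approach as the paper: assume $G[\chi]$ is trivial, use the odd-degree constraint from Lemma~\ref{lem2} to force $\chi\chi^* = \varepsilon + \alpha + \beta$ with $\deg\alpha = 3$, $\deg\beta = 5$, and then derive a forbidden self-dual character from the self-duality of $\chi\chi^*$. The paper states the decomposition directly and asserts both $\alpha$ and $\beta$ are self-dual, whereas you justify the $3+5$ split explicitly and only extract the self-duality of $\alpha$; these are cosmetic differences.
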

\begin{proof} Let $\chi_3$ be an irreducible character of degree $3$. By
Lemma \ref{lem2}, $H$ does not have irreducible characters of even
degree. Therefore, if $G[\chi_3]$ is trivial then
$\chi_3\chi_3^*=\varepsilon+\chi_3'+\chi_5$ for some $\chi_3'\in
X_3,\chi_5\in X_5$. Since $\chi_3\chi_3^*$ is self-dual, $\chi_3'$
and $\chi_5$ are self-dual. It contradicts the assumption and
\cite[Theorem 4]{Kashina}. Hence, $G[\chi]$ is not trivial for every
$\chi\in X_3$. By Lemma \ref{lem1} (1), the order of $G[\chi]$ is
$3$ or $9$. Thus, $3$ divides $|G(H^*)|$ since $G[\chi]$ is a
subgroup of $G(H^*)$ for every $\chi\in X_3$.

The second statement can be obtained by the Nichols--Zoeller
Theorem.
\end{proof}
\begin{remark}
The above lemma has appeared in \cite[Corollary]{Burciu} and
\cite[Theorem 4.4]{Kashina2}, respectively. In the first paper,
Burciu does not assume that the characteristic of the base field is
zero, but adds the assumption that $H$ has no even-dimensional
simple modules. Accordingly, his proof is rather different from
ours. The author learned the result in the second paper after he
finished this paper. Our proof here is slightly different from that
in the second paper. So we give the proof for the sake of
completeness.
\end{remark}

\begin{corollary}\label{lem4}
Assume  that ${\rm dim}H$ is odd and $H$ is of type
$(1,n;3,m;\cdots)$ as an algebra. If

(1)\,$H$ does not have irreducible characters of degree $9$, or

(2)\,there exists a non-trivial subgroup $G$ of $G(H^*)$ such that
$G[\chi]=G$ for all $\chi\in X_3$,

then $H$ has a quotient Hopf algebra of dimension $n+9m$.
\end{corollary}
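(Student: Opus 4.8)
The plan is to produce a $*$-invariant standard subalgebra of $R(H)$ whose span has dimension $n+9m$, and then invoke the bijection between $*$-invariant standard subalgebras of $R(H)$ and quotient Hopf algebras of $H$ (Nichols, \cite[Theorem 6]{Nichols}). So the real content is to show that the linear span $A$ of $X_1\cup X_3$ (all group-like characters together with all degree-$3$ irreducible characters) is closed under multiplication and under $*$; its dimension is then $n\cdot 1 + m\cdot 9$ by counting, since $X_1$ contributes $n$ characters and $X_3$ contributes $m$, and a standard subalgebra has dimension equal to the sum of squares of the degrees of the irreducible characters spanning it.

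First I would record what $*$-invariance needs: $\varepsilon\in X_1$, $X_1$ is a group under multiplication (the grouplikes of $H^*$) hence closed under $*$, and for $\chi\in X_3$ we have $\deg\chi^*=\deg\chi=3$ so $\chi^*\in X_3$; thus $A$ is $*$-invariant automatically. Next, closure under products: a product $g\cdot h$ of grouplikes is grouplike; a product $g\cdot\chi$ with $g\in X_1$, $\chi\in X_3$ is an irreducible character of degree $3$ (grouplikes act on $\mathrm{Irr}(H)$ by permutation, preserving degree), so it lies in $X_3\subseteq A$. The only nontrivial case is $\chi\psi$ with $\chi,\psi\in X_3$: this is a character of degree $9$, and I must show it decomposes as a sum of characters of degrees $1$ and $3$ only. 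By Lemma \ref{lem2} (since $\dim H$ is odd) every irreducible character of $H$ has odd degree, so a priori $\chi\psi$ could involve characters of degree $5,7,9$. Under hypothesis (1), degree $9$ is excluded by assumption, and I would argue degrees $5$ and $7$ cannot occur: here is where I expect the main obstacle. One route: consider $m(\alpha,\chi\psi)=m(\chi^*,\psi\alpha^*)=m(\psi,\chi^*\alpha)$ for $\alpha$ of degree $5$ or $7$; since $\chi^*\alpha$ has degree $15$ or $21$, respectively, and $\psi$ has degree $3$, one needs to show $\psi$ cannot occur as a constituent — but this is not automatic from degree alone. A cleaner approach in case (1): the set $X_1\cup X_3\cup X_5\cup X_7$ need not be a subalgebra, so instead I would use that $X_1\cup X_3$ is closed precisely because the only odd degrees $\le \deg(\chi\psi)=9$ available are $1,3,5,7,9$, and I must genuinely rule out $5$ and $7$. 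The argument I would try: if $\chi_5$ (degree $5$) appeared in some $\chi\psi$ with $\chi,\psi\in X_3$, then by reciprocity $\chi_5$ appears in $\chi^*$ times something of degree $3$... Actually the intended argument is almost surely simpler: under (1), $\chi\psi$ has degree $9$ and cannot contain a degree-$9$ character, and I suspect the paper uses Lemma \ref{lem3}-type self-duality plus the structure $\chi\psi = \sum_{g} g + (\text{higher})$ reasoning. I would look for constraints forcing the higher part into degree $3$; if a degree-$5$ or degree-$7$ piece is unavoidable one derives a contradiction with \cite[Theorem 4]{Kashina} on self-dual characters, mirroring the proof of Lemma \ref{lem3}.

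For hypothesis (2): here the span of $X_1\cup X_3$ is closed by a cleaner mechanism. For $\chi\in X_3$, the identity $\chi\chi^* = \sum_{g\in G[\chi]} g + \sum_{\deg\alpha>1} m(\alpha,\chi\chi^*)\alpha$ shows $G = G[\chi] \subseteq A$, and more importantly, for any $\chi,\psi\in X_3$ the decomposition of $\chi\psi$ has its grouplike part supported in (a coset-type refinement of) $G$, while the common stabilizer condition $G[\chi]=G$ for all $\chi\in X_3$ forces the higher-degree part to be $G$-stable. One then shows $9 = \deg(\chi\psi) = |G\text{-part}| + 3\cdot(\#\text{degree-}3\text{ constituents, with multiplicity})$ plus possible degree-$5,7,9$ terms, and the $G$-stability plus $|G|$ dividing things appropriately kills the bad degrees. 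Concretely, since every constituent $\alpha$ of $\chi\psi$ satisfies $g\alpha=\alpha$ for all $g\in G$ (because $\chi,\psi$ are $G$-stable up to the $G[\chi]$ condition — this needs to be checked carefully), we get $|G|$ divides $9$, so $|G|\in\{3,9\}$; and an irreducible $\alpha$ with $G\subseteq G[\alpha]$ and $|G|\ge 3$ has $(\deg\alpha)^2 \ge |G[\alpha]| \ge 3$, which alone does not exclude $\deg\alpha\in\{5,7\}$, so again I would fall back on the self-duality obstruction from Lemma \ref{lem2}/\cite[Theorem 4]{Kashina}. In both cases the conclusion is the same: $A = \mathrm{span}(X_1\cup X_3)$ is a $*$-invariant standard subalgebra of dimension $n+9m$, hence corresponds to a quotient Hopf algebra of $H$ of that dimension. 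The delicate point throughout — and the step I expect to be the genuine obstacle — is ruling out degree-$5$ and degree-$7$ constituents in products of two degree-$3$ characters; everything else is bookkeeping with the reciprocity formulas and the grouplike action.
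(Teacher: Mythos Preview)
Your overall framework is exactly right: build the $*$-invariant standard subalgebra spanned by $X_1\cup X_3$ and invoke \cite[Theorem 6]{Nichols}. Your handling of $*$-invariance and of the products $gh$ and $g\chi$ is fine, and you correctly isolate the only real issue: ruling out degree-$5$ and degree-$7$ constituents in $\chi\psi$ for $\chi,\psi\in X_3$. But you do not actually find the argument, and the routes you sketch (reciprocity bookkeeping, $G$-stability of constituents, direct appeal to self-duality) do not close the gap.

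Here is the missing idea. First, under either hypothesis (1) or (2) one checks that $\chi\psi$ is not irreducible of degree $9$: under (1) this is immediate, and under (2) it follows from \cite[Lemma~2.5]{dong} (this is the only place hypothesis (2) is used --- not via any $G$-stability of constituents as you propose). Now suppose some $\chi_5\in X_5$ occurs in $\chi\psi$. Since $\deg(\chi\psi)=9$ and all irreducible degrees are odd (Lemma~\ref{lem2}), the only possible shape is
\[
\chi\psi=\chi_5+\chi_3'+g,\qquad \chi_3'\in X_3,\ g\in G(H^*).
\]
From $m(g,\chi\psi)=1$ one gets $\chi=g\psi^*$, hence $\chi\psi=g\,\psi^*\psi$ and therefore
\[
\psi^*\psi=g^{-1}\chi_5+g^{-1}\chi_3'+\varepsilon.
\]
Thus the only grouplike in $\psi^*\psi$ is $\varepsilon$, i.e.\ $G[\psi]$ is trivial. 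This contradicts what was established in the proof of Lemma~\ref{lem3}: for $\dim H$ odd, $G[\psi]$ is nontrivial for every $\psi\in X_3$. (So the contradiction is with Lemma~\ref{lem3}, not directly with \cite[Theorem~4]{Kashina} as you guessed.) The degree-$7$ case is ``similar'': the only shape is $\chi\psi=\chi_7+g+h$ with distinct grouplikes, and then $m(g,\chi\psi)=m(h,\chi\psi)=1$ forces $\chi=g\psi^*=h\psi^*$, impossible.

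In short, the trick you are missing is: the presence of a bad constituent forces a grouplike $g$ into $\chi\psi$, which lets you translate the product to $g\,\psi^*\psi$ and read off that $G[\psi]$ is trivial, contradicting Lemma~\ref{lem3}. Once you have this, both hypotheses (1) and (2) feed into a single uniform argument.
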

\begin{proof} Let $\chi,\psi$ be irreducible characters of degree $3$. By
assumption and \cite[Lemma 2.5]{dong}, $\chi\psi$ is not
irreducible. If there exists $\chi_5\in X_5$ such that
$m(\chi_5,\chi\psi)>0$ then $\chi\psi=\chi_5+\chi_3+g$ for some
$\chi_3\in X_3$ and $g\in G(H^*)$, by Lemma \ref{lem2}. From
$m(g,\chi\psi)=m(\chi,g\psi^*)=1$, we get $\chi=g\psi^*$. Then
$\chi\psi=g\psi^*\psi=\chi_5+\chi_3+g$ shows that
$\psi^*\psi=g^{-1}\chi_5+g^{-1}\chi_3+\varepsilon$. This contradicts
Lemma \ref{lem3}. Similarly, we can show that there does not exist
$\chi_7\in X_7$ such that $m(\chi_7,\chi\psi)>0$. Therefore,
$\chi\psi$ is a sum of irreducible characters of degree $1$ or $3$.
It follows that irreducible characters of degree $1$ and $3$ span a
standard subalgebra of $R(H)$ and $H$ has a quotient Hopf algebra of
dimension $n+9m$.
\end{proof}
\begin{lemma}\label{lem4-1}
Assume that ${\rm dim}H$ is odd and $H$ does not have simple modules
of dimension $3$ and $7$. If $H$ has a simple module of dimension
$5$, then $5$ divides the order of $G(H^*)$. In particular, $5$
divides ${\rm dim}H$.
\end{lemma}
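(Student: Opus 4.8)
The plan is to mimic the proof of Lemma \ref{lem3} as closely as possible. Let $\chi=\chi_5$ be an irreducible character of degree $5$. By Lemma \ref{lem2}, $\dim H$ being odd forces all irreducible characters of $H$ to have odd degree, and every non-trivial isotypic component with a fixed degree appears with even multiplicity; in particular no non-trivial irreducible character of $H$ is self-dual. The strategy is to suppose, for contradiction, that $G[\chi]$ is trivial for some $\chi\in X_5$, and to derive a forbidden self-dual irreducible character from the decomposition of $\chi\chi^*$.

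First I would analyze $\chi\chi^*$. It is self-dual of degree $25$, it contains $\varepsilon$ exactly once (since $G[\chi]=1$), and since the hypothesis excludes simple modules of dimension $3$ and $7$, the remaining irreducible constituents have degree $1$, $5$, $9$, $11$, $\dots$. Because $G[\chi]=1$, Lemma \ref{lem1}(1) would still allow other group-likes to occur in $\chi\chi^*$, but $m(g,\chi\chi^*)>0$ forces $g\chi=\chi$ hence $g=\varepsilon$; so no non-trivial group-like appears and the $24$ remaining dimensions are filled by characters of degree $\geq 5$. The possibilities are $24=5+5+5+9$ (not possible since that is $24$ only if... let me recount: $5\cdot\text{(number of deg-}5\text{)}+9a+11b+\cdots=24$), giving essentially $\chi\chi^*=\varepsilon+\text{(three characters of degree }5)+\cdots$ or involving a degree-$9$ or larger term. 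One then uses self-duality of $\chi\chi^*$: its constituents come in dual pairs, and any constituent fixed by $*$ is self-dual, which is forbidden for the non-trivial ones. Pairing up an even number of the degree-$5$ constituents and accounting for parity of the count forces at least one self-dual non-trivial constituent, the desired contradiction — unless the decomposition can be arranged entirely in dual pairs.

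The main obstacle, and the reason the extra hypothesis (no simple modules of dimension $3$ and $7$) is needed, is exactly ruling out a clean dual-pairing of the constituents of $\chi\chi^*$. For instance $24 = 12+12$ could in principle be two degree-$12$ modules — but degrees are odd, so that fails; more seriously one must exclude configurations like $\chi\chi^* = \varepsilon + \alpha + \alpha^* + \beta + \beta^*$ with suitable odd degrees summing to $24$, or $\chi\chi^*=\varepsilon+\alpha+\alpha^*+\gamma$ with $\gamma$ self-dual (forbidden). To handle these I would invoke the Frobenius-reciprocity identities $m(\alpha,\chi\chi^*)=m(\alpha^*,\chi\chi^*)=m(\chi,\alpha\chi)$ together with the degree bookkeeping, exactly as in Corollary \ref{lem4}: if a degree-$5$ constituent $\chi_5'$ of $\chi\chi^*$ is not self-dual, writing $\chi\chi^*=\chi_5'+\cdots$ and multiplying back to isolate $(\chi')^*\chi'$ or $\chi^*\chi_5'$ produces a decomposition of some product of degree-$5$ characters whose constituents are constrained by Lemma \ref{lem3} and the absence of degrees $3,7$; one then shows the only surviving possibility produces a non-trivial self-dual character, contradicting \cite[Theorem 4]{Kashina}. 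Hence $G[\chi]$ is non-trivial for every $\chi\in X_5$; by Lemma \ref{lem1}(1) its order is $5$ or $25$, so $5\mid|G(H^*)|$, and the Nichols--Zoeller Theorem gives $5\mid\dim H$.
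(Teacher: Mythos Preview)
Your overall strategy is exactly the paper's: assume $G[\chi]$ is trivial for some $\chi\in X_5$, decompose $\chi\chi^*$, and locate a non-trivial self-dual constituent, contradicting \cite[Theorem 4]{Kashina}. Where you go astray is in the ``main obstacle'' paragraph: the obstacle you worry about does not exist, and the Frobenius-reciprocity workaround you sketch is both unnecessary and too vague to be a proof.

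The point you are missing is that \emph{dual characters have the same degree}. Hence a fully dual-paired decomposition $\chi\chi^*=\varepsilon+\alpha+\alpha^*+\beta+\beta^*$ would force $2(\deg\alpha+\deg\beta)=24$, i.e.\ $\deg\alpha+\deg\beta=12$, which is impossible with odd degrees $\geq 5$ once $7$ is excluded (the only candidate $5+7$ is ruled out). Likewise $\varepsilon+\alpha+\alpha^*$ would need $\deg\alpha=12$, which is even. So no dual-paired configuration can occur at all, and there is nothing to ``handle''. The paper's proof simply lists the four possible partitions of $24$ into parts from $\{5,9,11,13,15,17,19,\dots\}$:
\[
\chi\chi^*=\varepsilon+\chi_{11}+\chi_{13},\quad \varepsilon+\chi_{9}+\chi_{15},\quad \varepsilon+\chi_{5}+\chi_{19},\quad \varepsilon+\chi_{5}^1+\chi_{5}^2+\chi_{5}^3+\chi_{9},
\]
and observes that in each case some degree occurs exactly once (respectively $11$, $9$, $19$, $9$), so that constituent must equal its own dual. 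This is the whole argument; no multiplying back, no appeal to Lemma~\ref{lem3} or Corollary~\ref{lem4} is needed.
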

\begin{proof} Let $\chi$ be an irreducible character of degree $5$. By
assumption and Lemma \ref{lem2}, if $G[\chi]$ is trivial then there
are four possible decomposition of $\chi\chi^*$:
$$\chi\chi^*=\varepsilon+\chi_{11}+\chi_{13};\chi\chi^*=\varepsilon+\chi_{9}+\chi_{15};
\chi\chi^*=\varepsilon+\chi_{5}+\chi_{19};
\chi\chi^*=\varepsilon+\chi_{5}^1+\chi_{5}^2+\chi_{5}^3+\chi_{9},$$
where $\chi_i,\chi_j^k$ are irreducible characters of degree $i,j$.
In all cases, there exists at least one irreducible character such
that it is self-dual, since $\chi\chi^*$ is self-dual. It
contradicts the assumption and \cite[Theorem 4]{Kashina}. Therefore,
$G[\chi]$ is not trivial  for every $\chi\in X_5$. Hence, $5$
divides the order of $G(H^*)$ by Lemma \ref{lem1} (1).
\end{proof}

\subsection{Semisolvability}\label{sec2-2}
Let $B$ be a finite-dimensional Hopf algebra over $k$. A Hopf
subalgebra $A\subseteq B$ is called normal if $h_1AS(h_2)\subseteq
A$ and $S(h_1)Ah_2\subseteq A$, for all $h\in B$. If $B$ does not
contain proper normal Hopf subalgebras then it is called simple. The
notion of simplicity is self-dual, that is, $B$ is simple if and
only if $B^*$ is simple.

The notions of upper and lower semisolvability for
finite-dimensional Hopf algebras have been introduced in
\cite{Montgomery}, as generalizations of the notion of solvability
for finite groups. By definition, $H$ is called lower semisolvable
if there exists a chain of Hopf subalgebras
$$H_{n+1} = k\subseteq
H_{n}\subseteq\cdots \subseteq H_1 = H$$ such that $H_{i+1}$ is a
normal Hopf subalgebra of $H_i$, for all $i$, and all quotients
$H_{i}/H_{i}H^+_{i+1}$ are trivial. That is, they are isomorphic to
a group algebra or a dual group algebra. Dually, $H$ is called upper
semisolvable if there exists a chain of quotient Hopf algebras
$$H_{(0)} =
H\xrightarrow{\pi_1}H_{(1)}\xrightarrow{\pi_2}\cdots\xrightarrow{\pi_n}H(n)
= k$$ such that $H_{(i-1)}^{co\pi_{i}}=\{h\in H_{(i-1)}|(id\otimes
\pi_i)\Delta(h)=h\otimes 1\}$ is a normal Hopf subalgebra of
$H_{(i-1)}$, and all $H_{(i-1)}^{co\pi_i}$ are trivial.

In analogy with the situations for finite groups, it is enough for
many applications to know that a Hopf algebra is semisolvable.

By \cite[Corollary 3.3]{Montgomery}, we have that $H$ is upper
semisolvable if and only if $H^*$ is lower semisolvable. If this is
the case, then $H$ can be obtained from group algebras and their
duals by means of (a finite number of) extensions.

\subsection{Radford's biproduct}\label{sec2-3}Let $A$ be a
semisimple Hopf algebra and let ${}^A_A\mathcal{YD}$ denote the
braided category of Yetter-Drinfeld modules over $A$. Let $R$ be a
semisimple Yetter-Drinfeld Hopf algebra in ${}^A_A\mathcal{YD}$.
Denote by $\rho :R\to A\otimes R$, $\rho (a)=a_{-1} \otimes a_0 $,
and $\cdot :A\otimes R\to R$, the coaction and action of $A$ on $R$,
respectively. We shall use the notation $\Delta (a)=a^1\otimes a^2$
and $S_R $ for the comultiplication and the antipode of $R$,
respectively.

Since $R$ is in particular a module algebra over $A$, we can form
the smash product (see \cite[Definition 4.1.3]{Montgomery}). This is
an algebra with underlying vector space $R\otimes A$, multiplication
is given by $$(a\otimes g)(b\otimes h)=a(g_1 \cdot b)\otimes g_2 h,
\mbox{\;for all\;}g,h\in A,a,b\in R,$$ and unit $1=1_R\otimes1_A$.

Since $R$ is also a comodule coalgebra over $A$, we can dually form
the smash coproduct. This is a coalgebra with underlying vector
space $R\otimes A$, comultiplication is given by $$\Delta (a\otimes
g)=a^1\otimes (a^2)_{-1} g_1 \otimes (a^2)_0 \otimes g_2
,\mbox{\;for all\;}h\in A,a\in R, $$ and counit
$\varepsilon_R\otimes\varepsilon_A$.

As observed by D. E. Radford (see \cite[Theorem 1]{Radford}), the
Yetter-Drinfeld condition assures that $R\otimes A$ becomes a Hopf
algebra with these structures. This Hopf algebra is called the
Radford's biproduct of $R$ and $A$. We denote this Hopf algebra by $
R\#A$ and write $a\# g=a\otimes g$ for all $g\in A,a\in R$. Its
antipode is given by
$$S(a\# g)=(1\# S(a_{-1} g))(S_R (a_0 )\# 1),\mbox{\;for
all\;}g\in A,a\in R.$$

A biproduct $R\#A$ as described above is characterized by the
following property(see \cite[Theorem 3]{Radford}): suppose that $H$
is a finite-dimensional Hopf algebra endowed with Hopf algebra maps
$\iota:A\to H$ and $\pi:H\to A$ such that $\pi \iota:A\to A$ is an
isomorphism. Then the subalgebra $R= H^{co\pi}$ has a natural
structure of Yetter-Drinfeld Hopf algebra over $A$ such that the
multiplication map $R\#A\to H$ induces an isomorphism of Hopf
algebras.

The following lemma is a special case of \cite[Lemma
4.1.9]{Natale4}.
\begin{lemma}\label{lem5}
Let $H$ be a semisimple Hopf algebra of dimension $p^2q^2$, where
$p,q$ are distinct prime numbers. If $gcd(|G(H)|,|G(H^*)|)=p^2$,
then $H\cong R\#kG$ is a biproduct, where $kG$ is the group algebra
of group $G$ of order $p^2$, $R$ is a semisimple Yetter-Drinfeld
Hopf algebra in $^{kG}_{kG}\mathcal{YD}$ of dimension $q^2$.
\end{lemma}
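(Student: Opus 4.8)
The plan is to realize $H$ as a Radford biproduct $R\#kG$ by exhibiting a group $G$ of order $p^2$ together with Hopf algebra maps $\iota\colon kG\to H$ and $\pi\colon H\to kG$ with $\pi$ surjective and $\pi\iota$ an isomorphism, and then to invoke Radford's characterization of biproducts \cite[Theorem 3]{Radford} recalled in Subsection~\ref{sec2-3}. The hypothesis is used only to guarantee that $p^2$ divides both $|G(H)|$ and $|G(H^*)|$; combined with the Nichols--Zoeller theorem \cite{Nichols2}, which forces $|G(H)|$ and $|G(H^*)|$ to divide $\dim H=p^2q^2$, this shows that the $p$-parts of $|G(H)|$ and of $|G(H^*)|$ are exactly $p^2$, so each of $G(H),G(H^*)$ has a subgroup of order $p^2$.

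First I would construct the maps. Fix subgroups $G'\le G(H)$ and $F\le G(H^*)$, each of order $p^2$. Then $k[G']$ is a Hopf subalgebra of $H$, giving $\iota\colon k[G']\hookrightarrow H$, and $k[F]$ is a Hopf subalgebra of $H^*$. Since $|F|=p^2$ the group $F$ is abelian, so dualizing the inclusion $k[F]\hookrightarrow H^*$ produces a surjective Hopf algebra map $\pi\colon H\to (k[F])^*\cong k[\widehat F]$; put $G:=\widehat F$, so that $|G|=p^2$ and $\pi\colon H\to kG$.

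The main point is that $\pi\iota\colon k[G']\to kG$ is an isomorphism. Both algebras are semisimple of dimension $p^2$, so it suffices to prove $\pi\iota$ injective; being a morphism of group algebras it is determined by the induced homomorphism on grouplike elements, $G'\to G(kG)=\widehat F$, $g\mapsto(\chi\mapsto\langle\chi,g\rangle)$, whose kernel is $N=\{g\in G'\colon\langle\chi,g\rangle=1\ \text{for all}\ \chi\in F\}$. Assume $N\ne 1$. Then $k[N]$ is a nontrivial Hopf subalgebra of $H$, and since $\pi(g)=1$ for every $g\in N$ we get $k[N]\subseteq H^{\mathrm{co}\,\pi}$, the left coideal subalgebra of $\pi$-coinvariants, which has dimension $\dim H/\dim kG=q^2$. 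A Hopf subalgebra contained in a coideal subalgebra of a semisimple Hopf algebra has dimension dividing that of the coideal subalgebra (this follows from the Nichols--Zoeller theorem together with the freeness of a semisimple Hopf algebra over its coideal subalgebras), so $|N|$ divides $q^2$; but $N\le G'$ is a $p$-group, forcing $N=1$, a contradiction. Hence $\pi\iota$ is an isomorphism.

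Finally, set $\pi':=(\pi\iota)^{-1}\circ\pi\colon H\to k[G']$, so that $\pi'\iota=\mathrm{id}$. By \cite[Theorem 3]{Radford} (with $A=k[G']$), $R:=H^{\mathrm{co}\,\pi'}$ carries the structure of a Yetter--Drinfeld Hopf algebra in ${}^{k[G']}_{k[G']}\mathcal{YD}$, the multiplication map induces a Hopf algebra isomorphism $R\#k[G']\cong H$, the algebra $R$ is semisimple (being a coideal subalgebra of the semisimple Hopf algebra $H$), and $\dim R=\dim H/\dim k[G']=q^2$; taking $G:=G'$ yields the statement. The step I expect to be the main obstacle is the divisibility argument forcing $N=1$: one has to argue carefully that a Hopf subalgebra sitting inside a coideal subalgebra of a semisimple Hopf algebra has dimension dividing the latter (via freeness of $H$ over its coideal subalgebras and a Krull--Schmidt argument in the semisimple module category of $k[N]$). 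A serviceable alternative is to dualize --- the inclusion $k[N]\hookrightarrow H$ dualizes to a surjection $\tau\colon H^*\to k^N$ annihilating all of $F\subseteq G(H^*)$, so that $k[F]\subseteq(H^*)^{\mathrm{co}\,\tau}$, whose dimension $p^2q^2/|N|$ is not divisible by $p^2=|F|$ --- but this rests on the same principle.
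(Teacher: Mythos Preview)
The paper does not actually prove this lemma: it records it as a special case of \cite[Lemma 4.1.9]{Natale4} and gives no argument. Your proposal supplies a direct proof, and it is correct; it is in fact the standard argument behind Natale's lemma specialized to this situation.

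On the one step you flag as delicate: the divisibility $|N|\mid\dim H^{\mathrm{co}\,\pi}$ does follow cleanly along the lines you indicate. Since $\pi$ is a Hopf surjection onto a finite-dimensional Hopf algebra, $H$ is free as a left $H^{\mathrm{co}\,\pi}$-module of rank $p^2$, so $H\cong (H^{\mathrm{co}\,\pi})^{p^2}$ as $k[N]$-modules; on the other hand $H\cong k[N]^{p^2q^2/|N|}$ by Nichols--Zoeller. The algebra $k[N]$ is commutative semisimple (as $N\le G'$ with $|G'|=p^2$), so every simple $k[N]$-module is one-dimensional, and Krull--Schmidt comparison of multiplicities in these two decompositions gives $p^2 a_i = p^2q^2/|N|$ for each simple, whence $|N|\mid q^2$ and thus $N=1$. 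Your dual reformulation is also fine once ``annihilating'' is read as ``sending to the counit'': for $\chi\in F$ one has $\tau(\chi)=\varepsilon$, which is precisely the grouplike condition for $\chi\in (H^*)^{\mathrm{co}\,\tau}$.

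So the comparison is simply that the paper outsources the proof to \cite{Natale4}, while you unpack it; what your approach buys is self-containment and an explicit identification of the only nontrivial point.
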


\section{Semisimple Hopf algebras of dimension $p^2q^2$}\label{sec3}
Let $p,q$ be distinct prime numbers with $p<q$. Throughout this
section, $H$ will be a semisimple Hopf algebra of dimension
$p^2q^2$, unless otherwise stated. By Nichols-Zoeller Theorem
\cite{Nichols2}, the order of $G(H^*)$ divides ${\rm dim}H$.
Moreover, $|G(H^*)|\neq1$ by \cite[Proposition 9.9]{Etingof2}. By
\cite[Lemma 1]{dong}, $H$ is of Frobenius type. Therefore, the
dimension of a simple $H$-module can only be $1,p,p^2$ or $q$. Let
$a,b,c$ be the number of non-isomorphic simple $H$-modules of
dimension $p,p^2$ and $q$, respectively. It follows that we have an
equation $p^2q^2=|G(H^*)|+ap^2+bp^4+cq^2$. In particular, if
$|G(H^*)|=p^2q^2$ then $H$ is a dual group algebra; if
$|G(H^*)|=pq^2$ then $H$ is upper semisolvable by the following
lemma, which  is due to \cite[Lemma 2.3]{dong}.

\begin{lemma}\label{lem6}
If $H$ has a Hopf subalgebra $K$ of dimension $pq^2$ then $H$ is
lower semisolvable.
\end{lemma}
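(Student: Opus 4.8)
The plan is to produce a lower semisolvable series for $H$ by splicing the one--step extension $K\subseteq H$ on top of such a series for $K$. The first ingredient is that $K$ is itself lower semisolvable: since $\dim K=pq^2$, the classification of semisimple Hopf algebras of this dimension in \cite{Etingof2} shows that $K$ is built from group algebras and their duals by iterated extensions, so $K$ carries a chain $k=K_{m+1}\subseteq\cdots\subseteq K_1=K$ with each $K_{i+1}$ normal in $K_i$ and each quotient $K_i/K_iK_{i+1}^+$ trivial. (Alternatively, one argues directly that a semisimple Hopf algebra of dimension $pq^2$ has a nontrivial normal Hopf subalgebra and inducts on the dimension, using Nichols--Zoeller.)

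The heart of the matter is to prove that $K$ is a \emph{normal} Hopf subalgebra of $H$, exploiting that the index $[H:K]=\dim H/\dim K=p$ is the smallest prime dividing $\dim H=p^2q^2$. Here I would consider $C:=H/HK^+$: since $HK^+$ is at once a left ideal and a coideal of $H$, $C$ is a quotient left $H$-module coalgebra with $\dim C=[H:K]=p$, and as a quotient coalgebra of the cosemisimple coalgebra $H$ it is cosemisimple. Then, analyzing the $H$-module structure on this $p$-dimensional cosemisimple coalgebra and using that $p$ is the least prime divisor of $\dim H$, one concludes that the coalgebra surjection $H\to C$ is actually a morphism of Hopf algebras, i.e.\ $HK^+=K^+H$ --- which is exactly the assertion that $K$ is normal in $H$. (Alternatively, one may invoke the general fact that a Hopf subalgebra whose index equals the least prime divisor of the dimension is normal.)

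Once $K$ is known to be normal in $H$, the quotient $\overline H=H/HK^+$ is a semisimple Hopf algebra of prime dimension $p$, hence $\overline H\cong kC_p$ by \cite{Zhu} and is trivial. Therefore $k=K_{m+1}\subseteq\cdots\subseteq K_1=K\subseteq H$ is a chain of Hopf subalgebras with each term normal in the next and all successive quotients trivial, so $H$ is lower semisolvable. The step I expect to be the genuine obstacle is the normality of $K$ in $H$: for finite groups a subgroup of least prime index is automatically normal, but in the Hopf-algebra setting this requires real work --- either a careful analysis of the module coalgebra $C=H/HK^+$ (using the minimality of $p$ to pin down its structure) or an appeal to the appropriate normality theorem. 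All the rest --- cosemisimplicity of $C$, the classification in dimension $pq^2$, Zhu's theorem in dimension $p$, and concatenation of chains --- is routine bookkeeping.
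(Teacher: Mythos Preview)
The paper does not actually prove this lemma; it merely records that it is \cite[Lemma~2.3]{dong} and moves on. So there is no argument in the present paper to compare yours against.

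That said, your outline is correct and is almost certainly what lies behind the cited lemma. The index $[H:K]=p$ is the smallest prime dividing $\dim H=p^2q^2$, and the Hopf-algebra analogue of the ``subgroup of least prime index is normal'' fact is a theorem of Kobayashi and Masuoka (\emph{A result extended from finite groups to Hopf algebras}, Tsukuba J.\ Math.\ \textbf{21} (1997), 55--58): a Hopf subalgebra whose index equals the smallest prime divisor of the dimension is normal. This disposes of the step you flagged as the genuine obstacle; you need not carry out the module--coalgebra analysis of $C=H/HK^+$ by hand. With $K$ normal, $H/HK^+$ has prime dimension $p$ and is therefore a group algebra by \cite{Zhu}, while $K$ itself, of dimension $pq^2$, is lower semisolvable (this was already known from Natale's work \cite{Natale1} prior to \cite{Etingof2}, though either reference suffices). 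Concatenating the chain for $K$ with the single step $K\subseteq H$ then exhibits $H$ as lower semisolvable, exactly as you describe.
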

The following lemma is a refinement of  \cite[Lemma 3.4]{dong}.
\begin{lemma}\label{lem7}
If the order of $G(H^*)$ is $q^2$ then $H$ is upper semisolvable.
\end{lemma}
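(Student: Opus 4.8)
The plan is to realise $H$ as a two-step extension of trivial Hopf algebras. First I would fix the algebra type: substituting $|G(H^*)|=q^2$ into the Frobenius-type equation $p^2q^2=|G(H^*)|+ap^2+bp^4+cq^2$ and reducing modulo $p^2$, together with the bound $c\le p^2-1$ coming from $ap^2+bp^4\ge 0$, forces $c=p^2-1$ and hence $a=b=0$. Thus $H$ is of type $(1,q^2;q,p^2-1)$ as an algebra: every nontrivial simple $H$-module has dimension $q$, and $R(H)$ has basis $G(H^*)\cup X_q$ with $|X_q|=p^2-1$.

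Next I would produce and analyse the canonical quotient. Put $\Gamma:=G(H^*)$, a group of order $q^2$; then $k\Gamma\subseteq H^*$ is a Hopf subalgebra of dimension $q^2$, and dualising its inclusion yields a Hopf algebra surjection $\pi:H\to k^{\Gamma}$ onto the commutative dual group algebra, so that $R:=H^{co\pi}$ has dimension $\dim H/\dim k^{\Gamma}=p^2$. To control $\pi$ I would study the left-translation action of $\Gamma$ on $X_q$. Since every nontrivial simple has dimension $q$, comparing degrees in $\chi\chi^*=\sum_{g\in G[\chi]}g+\sum_{\eta\in X_q}m(\eta,\chi\chi^*)\eta$ gives $q^2=|G[\chi]|+q(\cdots)$, so $q\mid|G[\chi]|$; with Lemma \ref{lem1}(1) this forces $|G[\chi]|\in\{q,q^2\}$, i.e.\ every $\Gamma$-orbit in $X_q$ has size $1$ or $q$. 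As $|X_q|=p^2-1<q$ — this is where the relation between $p$ and $q$ enters — every orbit is a point, so $g\chi=\chi$ for all $g\in\Gamma$ and $\chi\in X_q$, whence $\chi\chi^*=\sum_{g\in\Gamma}g$. Running the same argument with $\chi^*$ in place of $\chi$ gives $g\chi^*=\chi^*$, and applying the involution $*$ then yields $\chi g=\chi$; together with commutativity of $\Gamma$ this shows that $\Gamma$ lies in the centre of $R(H)$.

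The heart of the proof is to promote this to: $R$ is a normal Hopf subalgebra of $H$. The natural route is to show $\pi$ is cocentral — equivalently that $G(H^*)$ is central not only in $R(H)$ but in all of $H^*$ — using the centrality of the standard subalgebra $k\Gamma\subseteq R(H)$ together with the fact that all module dimensions are $1$ or $q$, so that $R=H^{co\pi}$ is a subcoalgebra stable under the adjoint action of $H$ and hence a normal Hopf subalgebra of dimension $p^2$. Granting this, $R$ is a semisimple Hopf algebra of dimension $p^2$, hence a group algebra by \cite{Masuoka} and in particular trivial, while $k^{\Gamma}$ is a dual group algebra, also trivial; so $H\xrightarrow{\pi}k^{\Gamma}\xrightarrow{\varepsilon}k$ is a chain whose successive coinvariant subalgebras $R$ and $k^{\Gamma}$ are normal and trivial, and $H$ is upper semisolvable.

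The main obstacle is precisely this promotion. For a non-cocommutative Hopf algebra the coinvariants of a Hopf surjection need not even form a Hopf subalgebra, so the existence of $\pi$ alone gives no information about normality; one must genuinely use that all module dimensions are $1$ or $q$ and that $\Gamma$ centralises $R(H)$, via the correspondence of \cite{Nichols} between $*$-invariant standard subalgebras and quotient Hopf algebras together with a criterion that a quotient attached to a central standard subalgebra is normal. A subsidiary point is that the orbit count only closes when $p^2-1<q$; the finitely many pairs with $p<q<p^2$ would require a separate treatment, for instance by analysing the possible orbits of size $q$ to exhibit a different normal Hopf subalgebra.
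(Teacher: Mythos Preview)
Your argument has a genuine gap at exactly the point you flag: the promotion from ``$\Gamma$ is central in $R(H)$'' to ``$\pi$ is cocentral'' (equivalently, $R=H^{co\pi}$ is a normal Hopf subalgebra of $H$) is not established. The equality $g\chi=\chi$ for all $g\in\Gamma$, $\chi\in X_q$ only says that the one-dimensional module $g$ tensors trivially with every simple module; it does not show that $g$ is central in $H^*$, because $R(H)$ is much smaller than $H^*$ and commuting with characters does not force commuting with matrix coefficients. The criterion you allude to (``a quotient attached to a central standard subalgebra is normal'') is not supplied, and in this generality it is not clear such a criterion exists. A second, structural problem is that your orbit count $|X_q|=p^2-1<q$ presupposes $q>p^2$, whereas the lemma is stated for all $p<q$; so even granting the normality step, your proof would not cover, e.g., $(p,q)=(3,5)$ or $(3,7)$.

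The paper avoids both difficulties by working in $H^*$ rather than $H$ and by asking for much less from the orbit analysis. One only needs a \emph{single} $\chi_q\in X_q$ with $G[\chi_q]=\Gamma$: since $q\nmid p^2-1$ whenever $p\geq 3$ and $q>p$ (as $q>p\geq 3$ forces $q>p+1$), not every orbit can have length divisible by $q$, so a fixed point exists without assuming $q>p^2$. If $C\subseteq H^*$ is the simple subcoalgebra corresponding to $\chi_q$, then $gC=C=Cg$ for all $g\in\Gamma$, and \cite[Proposition~3.2.6]{Natale4} gives directly that $k\Gamma$ is a \emph{normal} Hopf subalgebra of the Hopf subalgebra $k[C]\subseteq H^*$ generated by $C$. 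A dimension count then forces $\dim k[C]\in\{pq^2,p^2q^2\}$: in the first case Lemma~\ref{lem6} applies to $H^*$, and in the second $k[C]=H^*$ so $k\Gamma\trianglelefteq H^*$ with quotient of dimension $p^2$, hence a group algebra by \cite{Masuoka2}. Either way $H^*$ is lower semisolvable, hence $H$ is upper semisolvable. The residual case $p=2$, $q=3$ is handled separately by citing \cite{Natale4}. The moral: rather than trying to prove centrality of $\Gamma$ in all of $H^*$, it suffices to get normality of $k\Gamma$ inside the Hopf subalgebra generated by one $\Gamma$-stable simple coalgebra.
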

\begin{proof} If $p=2$ and $q=3$ then it is the case discussed in
\cite[Chapter 8]{Natale4}. Hence, $H$ is upper semisolvable.
Throughout the remainder of the proof, we assume that $p\geq 3$.

By Lemma \ref{lem1} (2), if $a\neq 0$ then $ap^2\geq p^2q^2$, a
contradiction. Hence, $a=0$. Similarly, $b=0$. If follows that $H$
is of type $(1,q^2;q,p^2-1)$ as an algebra.

The group $G(H^*)$ acts by left multiplication on the set $X_q$. The
set $X_q$ is a union of orbits which have length $1,q$ or $q^2$.
Since $q>p\geq 3$, $q$ does not divides $p^2-1$. Therefore, there
exists one orbit with length $1$. That is, there exists an
irreducible character $\chi_q\in X_q$ such that $G[\chi_q]=G(H^*)$.
This means that $g\chi_q=\chi_q=\chi_qg$ for all $g\in G(H^*)$.

Let $C$ be a $q^2$-dimensional simple subcoalgebra of $H^*$,
corresponding to $\chi_q$. Then $gC=C=Cg$ for all $g\in G(H^*)$. By
\cite[Proposition 3.2.6]{Natale4}, $G(H^*)$ is normal in $k[C]$,
where $k[C]$ denotes the subalgebra generated by $C$. It is a Hopf
subalgebra of $H^*$ containing $G(H^*)$.  Counting dimension, we
know ${\rm dim}k[C]\geq 2q^2$. Since ${\rm dim}k[C]$ divides ${\rm
dim}H$, we know ${\rm dim}k[C]=pq^2$ or $p^2q^2$. If ${\rm
dim}k[C]=pq^2$ then Lemma \ref{lem6} shows that $H^*$ is lower
semisolvable. If  ${\rm dim}k[C]=p^2q^2$ then $k[C]=H^*$. Since
$kG(H^*)$ is a group algebra and the quotient $H^*/H^*(kG(H^*))^+$
is trivial (see \cite{Masuoka2}), $H^*$ is lower semisolvable.
Hence, $H$ is upper semisolvable. This completes the proof.
\end{proof}

\begin{theorem}\label{thm1}
If $q>p^2$ then $H$ is either semisolvable or isomorphic to a
Radford's biproduct $R\# kG$, where $kG$ is the group algebra of
group $G$ of order $p^2$, $R$ is a semisimple Yetter-Drinfeld Hopf
algebra in ${}^{kG}_{kG}\mathcal{YD}$ of dimension $q^2$.
\end{theorem}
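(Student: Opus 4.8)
The plan is to analyze the possible algebra types of $H$ using the Frobenius-type equation $p^2q^2=|G(H^*)|+ap^2+bp^4+cq^2$, and in each case either produce a normal Hopf subalgebra of dimension $pq^2$ (which by Lemma~\ref{lem6} forces lower semisolvability of $H$, hence $H$ is semisolvable), or produce a situation where Lemma~\ref{lem5} applies (giving the biproduct conclusion), or reduce to the already-established Lemma~\ref{lem7}. Since $|G(H^*)|$ is a nontrivial divisor of $p^2q^2$, the possibilities are $|G(H^*)|\in\{p,p^2,q,q^2,pq,p^2q,pq^2,p^2q^2\}$. The cases $|G(H^*)|=p^2q^2$ (dual group algebra), $|G(H^*)|=pq^2$ (Lemma~\ref{lem6} applied to the whole group algebra $kG(H^*)$), and $|G(H^*)|=q^2$ (Lemma~\ref{lem7}) are already handled in the excerpt, so the real work is the remaining cases. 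The hypothesis $q>p^2$ will be used repeatedly through Lemma~\ref{lem1}(2): if there is an irreducible character of degree $q$, then $q\mid |G(H^*)|\cdot(\text{number of such characters})$, and divisibility/size constraints sharply limit $c$; likewise $q>p^2$ forces $q\nmid (p^2-1)$, $q\nmid (p-1)$, etc., which lets me find fixed points of the $G(H^*)$-action on $X_q$.

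First I would dispose of the cases where $q\mid |G(H^*)|$, i.e. $|G(H^*)|\in\{q,pq,p^2q\}$. Here $kG(H^*)$ is a normal Hopf subalgebra of $H^*$ (group algebras are normal — or rather, I should look at $H^{*\,co\pi}$ for suitable quotients); more usefully, since $q\mid |G(H^*)|$ and $q^2\mid \dim H$, one uses the standard technique (as in the proof of Lemma~\ref{lem7}) of taking a simple subcoalgebra $C$ of $H^*$ of dimension $q^2$ fixed by a Sylow-type subgroup and forming $k[C]$; counting dimensions and using $\dim k[C]\mid p^2q^2$ together with $q\mid|G(H^*)|$ should push $\dim k[C]$ into $\{pq^2,p^2q^2\}$, and in either case one gets a chain exhibiting lower semisolvability of $H^*$ via Lemma~\ref{lem6} or via the trivial quotient by $kG(H^*)$. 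If instead $c=0$ (no irreducible characters of degree $q$), then $p^2q^2=|G(H^*)|+ap^2+bp^4$, so $p^2\mid |G(H^*)|$, forcing $|G(H^*)|\in\{p^2,pq^2,p^2q^2\}$; the last two are done, and if $|G(H^*)|=p^2$ then $\gcd(|G(H)|,|G(H^*)|)=p^2$ (since $|G(H)|$ divides $\dim H$ and the only way to fail would be $|G(H)|$ a $q$-power, which is impossible as $|G(H)|\ge p^2$ would be needed — here I must be careful and argue that $|G(H)|\geq p^2$ or that $p^2\mid |G(H)|$ using that $H^*$ has a quotient of dimension $p^2$), so Lemma~\ref{lem5} yields the biproduct $R\#kG$.

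The remaining and most delicate range is $|G(H^*)|\in\{p,pq\}$ with $c\neq 0$. The main obstacle is excluding or handling the case $|G(H^*)|=p$ and simultaneously $|G(H)|=p$, where neither Lemma~\ref{lem5} nor a cheap dimension count applies directly; here I expect to need the $G(H^*)$-action on $X_q$ to produce a fixed character $\chi_q$ (using $q\nmid p^2-1$, which follows from $q>p^2$, to force an orbit of length $1$ in $X_q$ — this requires first pinning down $c$ modulo $q$ from the class equation), then build the Hopf subalgebra $k[C]$ of $H^*$ as in Lemma~\ref{lem7}'s proof, get $\dim k[C]\in\{pq^2,p^2q^2\}$, and conclude $H^*$ lower semisolvable. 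The case $|G(H^*)|=pq$ is similar but easier since then $p^2q^2 = pq + ap^2 + bp^4 + cq^2$ forces $q\mid (pq + cq^2)$ trivially and $p\mid c\cdot q^2$ hence... — actually one reads off $p\mid pq$ and $q \mid ap^2$, so $q\mid a$, and combined with Lemma~\ref{lem1} applied to degree-$p$ characters one bounds $a$, eventually forcing $a=0$ and reducing to the $c$-analysis above. Throughout, the recurring hard point is guaranteeing that the Hopf subalgebra one constructs inside $H^*$ has dimension exactly $pq^2$ (not $2q^2$ or some other divisor-failing value) so that Lemma~\ref{lem6} can be invoked; the inequality $q>p^2$ is exactly what makes the intermediate possibilities numerically impossible.
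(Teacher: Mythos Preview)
Your proposal misses the key tool the paper uses and, as a consequence, has a genuine gap in the hardest case.

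The paper's proof is organized around a result of Artamonov (\cite[Proposition~1.1]{Artamonov}): the irreducible characters of degree $1$, $p$, $p^2$ span a standard subalgebra, so $H$ admits a quotient Hopf algebra $\overline{H}$ of dimension $|G(H^*)|+ap^2+bp^4$. With this in hand, the cases $|G(H^*)|=q$ and $|G(H^*)|\in\{p,pq\}$ are handled by pure arithmetic on $\dim\overline{H}$: one shows $c\neq 0$, so $\dim\overline{H}<p^2q^2$, and then runs through the divisors of $p^2q^2$ compatible with $|G(H^*)|\mid\dim\overline{H}$. For $|G(H^*)|=q$ every possibility is contradictory (so this case does not occur), while for $|G(H^*)|\in\{p,pq\}$ the only surviving value is $\dim\overline{H}=pq^2$, and Lemma~\ref{lem6} applied to $H^*$ gives upper semisolvability. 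The remaining values of $|G(H^*)|$ are then covered by Lemmas~\ref{lem5}, \ref{lem6}, \ref{lem7} (together with the dual statements for $H^*$). You never invoke Artamonov's quotient, and that is precisely what makes the delicate cases tractable.

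Your substitute plan for $|G(H^*)|=p$ does not work as stated. You propose to find a fixed point of the $G(H^*)$-action on $X_q$ ``using $q\nmid p^2-1$'', but when $|G(H^*)|=p$ the orbits have length $1$ or $p$, and the relevant obstruction is whether $p\mid c$, not anything involving $q$. Reducing $p^2q^2=p+ap^2+bp^4+cq^2$ modulo $p$ gives $cq^2\equiv 0\pmod p$, hence $p\mid c$; so nothing forces an orbit of length $1$, and your construction of $k[C]$ never gets off the ground. The invocation of $q\nmid p^2-1$ is a vestige of the Lemma~\ref{lem7} argument (where $|G(H^*)|=q^2$ and $|X_q|=p^2-1$) and is simply irrelevant here. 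A similar issue affects your treatment of $|G(H^*)|=q$: you do not actually produce a fixed character, and even if you did, the possibility $\dim k[C]=q^2$ is not excluded and does not yield the conclusion you want. The fix is to use the Artamonov quotient $\overline{H}$ exactly as the paper does.
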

\begin{proof} By \cite[Proposition 1.1]{Artamonov}, $H$ has a quotient Hopf
algebra $\overline{H}$ of dimension $|G(H^*)|+ap^2+bp^4$. In
particular, $|G(H^*)|$ divides ${\rm dim}\overline{H}$ and $|
G(H^*)|+ap^2+bp^4$ divides ${\rm dim}H$.

We first prove that the order of $G(H^*)$ can not be $q$. Suppose on
the contrary that $|G(H^*)|= q$. We first note that $c\neq0$, since
otherwise we get the contradiction $p^2\mid q$. Since $q$ divides
${\rm dim}\overline{H}$ and $c\neq 0$, we have that ${\rm
dim}\overline{H}<p^2q^2$. Therefore, ${\rm
dim}\overline{H}=q,pq,p^2q,pq^2$ or $q^2$. If ${\rm
dim}\overline{H}=q^2$ then $(\overline{H})^*\subseteq kG(H^*)$ by
\cite{Masuoka2}. It is impossible since $q^2 ={\rm dim}\overline{H}$
does not divide $|G(H^*)|= q$. If ${\rm dim}\overline{H}=q,pq$ or
$p^2q$ then we have $p^2q^2= q+cq^2$, $p^2q^2=pq+cq^2$ or
$p^2q^2=p^2q+cq^2$. They all impossible. Hence, ${\rm
dim}\overline{H}=p^2q$. That is $q+ap^2+bp^4=pq^2$. It is
impossible, too.

We then prove that if $|G(H^*)|=p$ or $pq$ then $H$ is upper
semisolvable. We first note that $c\neq0$, since otherwise we get
the contradiction $p^2\mid p$. Then $p\mid{\rm dim}\overline{H}$ and
${\rm dim}\overline{H} < p^2q^2$. Therefore ${\rm dim}\overline{H} =
p, pq, p^2q, pq^2$ or $p^2$. Moreover, ${\rm dim}\overline{H}\neq
p^2$, since otherwise $(\overline{H})^*\subseteq kG(H^*)$ by
\cite{Masuoka2}, but $p^2 ={\rm dim}\overline{H}$ does not divide
$|G(H^*)|= p$ or $pq$. The possibilities ${\rm dim}\overline{H}= p,
pq$ or $p^2q$ lead, respectively to the contradictions $p^2q^2=
p+cq^2$, $p^2q^2=pq+cq^2$ and $p^2q^2=p^2q+cq^2$. Hence these are
also discarded, and therefore ${\rm dim}\overline{H}=pq^2$. This
implies that $H$ is upper semisolvable, by Lemma \ref{lem6}.

Finally, the theorem follows from Lemma \ref{lem5}, \ref{lem6} and
\ref{lem7}.
\end{proof}

As an immediate consequence of Theorem \ref{thm1}, we have the
following corollary.
\begin{corollary}\label{cor1}
If $p^2<q$ and $H$ is simple as a Hopf algebra then $H$ is
isomorphic to a Radford's biproduct $R\# kG$, where $kG$ is the
group algebra of group $G$ of order $p^2$, $R$ is a semisimple
Yetter-Drinfeld Hopf algebra in ${}^{kG}_{kG}\mathcal{YD}$ of
dimension $q^2$.
\end{corollary}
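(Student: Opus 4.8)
The plan is to specialize Theorem~\ref{thm1} to the case where $H$ has no proper normal Hopf subalgebras and show that the ``semisolvable'' alternative forces a contradiction, leaving only the biproduct alternative. First I would recall that a semisolvable Hopf algebra of dimension $p^2q^2$ must, by the very definition of lower/upper semisolvability, contain a proper nontrivial normal Hopf subalgebra (or dually admit a proper nontrivial quotient whose associated coinvariant subalgebra is normal): the chain $H_{n+1}=k\subseteq H_n\subseteq\cdots\subseteq H_1=H$ cannot have length $1$, since $\dim H=p^2q^2$ is not prime and $H$ is neither a group algebra nor a dual group algebra (if it were, it would not be simple, as $kG$ has the normal Hopf subalgebra $k$ but more to the point $|G(H)|>1$ gives a proper quotient, and similarly for $kG^*$—here one should be slightly careful and instead argue directly that $H$ simple and semisolvable is contradictory because the first step of the semisolvability chain produces exactly the kind of normal Hopf subalgebra, or normal coinvariant subalgebra, that simplicity forbids). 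Thus, under the hypothesis that $H$ is simple as a Hopf algebra, the semisolvable case of Theorem~\ref{thm1} is vacuous.

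Then I would invoke Theorem~\ref{thm1} directly: since $q>p^2$, that theorem says $H$ is \emph{either} semisolvable \emph{or} isomorphic to a Radford biproduct $R\#kG$ with $kG$ the group algebra of a group $G$ of order $p^2$ and $R$ a semisimple Yetter--Drinfeld Hopf algebra in ${}^{kG}_{kG}\mathcal{YD}$ of dimension $q^2$. Having ruled out the first alternative in the previous paragraph, the conclusion is immediate.

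The one point requiring genuine care—what I expect to be the main (admittedly small) obstacle—is the precise justification that ``simple as a Hopf algebra'' is incompatible with ``semisolvable'' for a Hopf algebra of this dimension. The subtlety is that in the upper-semisolvable chain $H_{(0)}=H\xrightarrow{\pi_1}H_{(1)}\to\cdots$, the first coinvariant subalgebra $H^{co\pi_1}$ is required to be a \emph{normal} Hopf subalgebra of $H$; if $H$ is simple this forces $H^{co\pi_1}=k$ or $H^{co\pi_1}=H$. In the former case $\pi_1$ is an isomorphism and the chain cannot terminate at $k$ unless $H=k$; in the latter case $\pi_1$ is trivial (lands in $k$) and again the chain stalls, unless $\dim H=1$. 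Either way we contradict $\dim H=p^2q^2>1$. The dual argument handles lower semisolvability via $H^*$. I would spell this out in a line or two, citing \cite[Corollary 3.3]{Montgomery} for the upper/lower duality, and then the corollary follows.
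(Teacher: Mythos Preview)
Your proposal is correct and follows exactly the route the paper takes: the paper states this corollary as an ``immediate consequence of Theorem~\ref{thm1}'' with no further argument, and your expansion---that simplicity rules out the semisolvable alternative because any nontrivial upper or lower semisolvability chain produces a proper normal Hopf subalgebra (or forces $H$ itself to be a group algebra or dual group algebra, impossible for a simple Hopf algebra of order $p^2q^2$ by Burnside)---is the standard and correct justification. Your identification of the only subtle point (the case where the first step of the chain is degenerate) and your handling of it are both fine.
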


In fact, examples of nontrivial semisimple Hopf algebras of
dimension $p^2q^2$ which are Radford's biproducts in such a way, and
are simple as Hopf algebras do exists. A construction of such
examples as twisting deformations of certain groups appears in
\cite[Remark 4.6]{Galindo}.

\section{Applications}\label{sec4}
\subsection{Semisimple Hopf algebras of dimension $9q^2$}
In this subsection, we shall prove the following theorem.
\begin{theorem}\label{thm2}
If $H$ is a semisimple Hopf algebra of dimension $9q^2$ then $H$ is
either semisolvable or isomorphic to a Radford's biproduct $R\# kG$,
where $kG$ is the group algebra of group $G$ of order $9$, $R$ is a
semisimple Yetter-Drinfeld Hopf algebra in
${}^{kG}_{kG}\mathcal{YD}$ of dimension $q^2$.
\end{theorem}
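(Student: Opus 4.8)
The plan is to reduce Theorem~\ref{thm2} to Theorem~\ref{thm1} whenever possible, and to handle the finitely many primes $q$ for which the hypothesis $q>p^2=9$ fails --- that is, $q=2,3,5,7$ --- by separate arguments. When $q>9$, Theorem~\ref{thm1} applies verbatim with $p=3$ and gives the conclusion directly, so the real work is in the small cases. The case $q=3$ gives dimension $81=3^4$, which is already covered by the known classification of semisimple Hopf algebras of dimension $p^4$ (in particular they are all semisolvable), so we may cite that. This leaves $q=2$ (dimension $36$), $q=5$ (dimension $225$), and $q=7$ (dimension $441$).

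For each remaining value of $q$ I would run the same dimension-counting machinery used in the proof of Theorem~\ref{thm1}. Write ${\rm dim}H=9q^2$ and note $|G(H^*)|\neq 1$ by \cite[Proposition 9.9]{Etingof2}, $|G(H^*)|$ divides $9q^2$ by Nichols--Zoeller, $H$ is of Frobenius type by \cite[Lemma 1]{dong}, so simple module dimensions lie in $\{1,3,9,q\}$, and $9q^2=|G(H^*)|+9a+81b+cq^2$. As in Theorem~\ref{thm1}, invoke \cite[Proposition 1.1]{Artamonov} to get a quotient Hopf algebra $\overline H$ of dimension $|G(H^*)|+9a+81b$, so that both $|G(H^*)|\mid {\rm dim}\overline H$ and ${\rm dim}\overline H\mid 9q^2$. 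The goal in each case is to show that either $|G(H^*)|\in\{q^2,3q^2,9q^2\}$ (whence $H$ is upper semisolvable by Lemma~\ref{lem7}, by the remark preceding Lemma~\ref{lem6}, or is a dual group algebra), or $H$ has a Hopf subalgebra of dimension $3q^2$ (whence lower semisolvable by Lemma~\ref{lem6}), or $\gcd(|G(H)|,|G(H^*)|)=9$ (whence $H\cong R\#kG$ by Lemma~\ref{lem5}). The subcase $q=2$, dimension $36$, should already be in the literature (Natale's work on Hopf algebras of dimension $36$, or \cite{Natale4}), so I would simply quote it.

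The genuinely new content is $q=5$ and $q=7$, where the hypothesis $p^2<q$ fails only mildly. Here I would exploit the odd-dimension results of Section~\ref{sec2}. Since $9q^2$ is odd, Lemma~\ref{lem2} forces every non-trivial irreducible degree to be odd, so for $q=5$ the degrees are $\{1,3,5,9\}$ and for $q=7$ they are $\{1,3,7,9\}$; moreover the multiplicities $a,b,c$ are even. Lemma~\ref{lem3} says that if $H$ has a $3$-dimensional simple module then $3\mid |G(H^*)|$, and Corollary~\ref{lem4} then produces a proper quotient Hopf algebra from the degree-$1$ and degree-$3$ characters; chasing its dimension through the divisibility constraints $|G(H^*)|\mid {\rm dim}\overline H\mid 9q^2$ should pin down $|G(H^*)|$ or force a Hopf subalgebra of dimension $3q^2$. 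For $q=7$, if instead $a=0$ (no degree-$3$ modules) the equation reads $9\cdot49=|G(H^*)|+81b+49c$ with $|G(H^*)|\in\{7,49,63,441,\dots\}$, and the finitely many solutions can be eliminated or resolved by hand. For $q=5$, Lemma~\ref{lem4-1} handles the degree-$5$ modules when degrees $3$ and $7$ are absent, giving $5\mid|G(H^*)|$.

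The main obstacle I anticipate is the case $q=7$, dimension $441=9\cdot 49$: it is the only small case where neither $p^2<q$ nor an odd-order-specific lemma cleanly applies, and the arithmetic $441=|G(H^*)|+9a+81b+49c$ admits several a priori possibilities for $(|G(H^*)|,a,b,c)$ that must each be excluded by finer character-theoretic arguments --- for instance using that $G(H^*)$ acts on $X_3$ and $X_7$ with orbit sizes dividing $|G(H^*)|$, that $G[\chi]$ has order dividing $({\rm deg}\chi)^2$, and the self-duality constraint from \cite[Theorem~4]{Kashina}. If any configuration genuinely survives, one would need to locate a normal Hopf subalgebra of dimension $pq^2=3\cdot49$ directly, mimicking the coalgebra argument in the proof of Lemma~\ref{lem7} (passing to a simple subcoalgebra $C$ of $H^*$ fixed by $G(H^*)$ and using \cite[Proposition~3.2.6]{Natale4}).
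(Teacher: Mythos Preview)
Your plan is essentially the paper's: reduce via Theorem~\ref{thm1} to the finitely many primes $q$ not covered, then dispatch those by a case analysis driven by the odd-dimension character lemmas (Lemmas~\ref{lem2}, \ref{lem3}, Corollary~\ref{lem4}) together with Lemmas~\ref{lem5}, \ref{lem6}, \ref{lem7}. Two remarks on execution. First, you are including $q=2$ and $q=3$ unnecessarily: in the paper's standing convention the two primes are distinct with $p<q$, so ``dimension $9q^2$'' means $p=3$ and $q\geq 5$; only $q=5,7$ survive the reduction, exactly as the paper states. Second, rather than re-running the Artamonov quotient argument from the proof of Theorem~\ref{thm1} (whose arithmetic eliminations were tailored to $q>p^2$ and would need re-checking), the paper proceeds more directly: for each of $q=5,7$ it uses Lemmas~\ref{lem1}--\ref{lem3} (computer-assisted) to list \emph{all} admissible algebra types, then for each type reads off $|G(H^*)|$ and either applies Lemma~\ref{lem6} or \ref{lem7} immediately, uses Corollary~\ref{lem4} to produce a quotient of dimension $3q^2$ (or to reach a Nichols--Zoeller contradiction), or falls into the $9\mid\gcd(|G(H)|,|G(H^*)|)$ situation handled by Lemma~\ref{lem5}. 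Your fallback plan (``eliminate or resolve by hand'', orbit-length and $G[\chi]$ arguments) would arrive at the same place, but the explicit type enumeration is cleaner and is what the paper actually does; in particular Lemma~\ref{lem4-1} is not needed for $q=5$ (it is Lemma~\ref{lem3} that does the work there), and the $q=7$ case is no harder than $q=5$ once the type list is in hand.
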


By Theorem \ref{thm1}, it suffices to consider the case $q=5$ and
$7$.
\begin{lemma}\label{lem8}
If $q=5$ then $H$ is either semisolvable or isomorphic to a
Radford's biproduct $R\# kG$, where $kG$ is the group algebra of
group $G$ of order $9$, $R$ is a semisimple Yetter-Drinfeld Hopf
algebra in ${}^{kG}_{kG}\mathcal{YD}$ of dimension $25$.
\end{lemma}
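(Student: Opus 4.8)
The plan is to determine the possible algebra types of $H$ and reduce each one to an earlier lemma. Since $\dim H = 225 = 3^2 5^2$, by \cite[Lemma 1]{dong} $H$ is of Frobenius type, so its simple modules have dimensions $1,3,9$ or $5$; writing $a,b,c$ for the numbers of non-isomorphic simple modules of dimensions $3,9,5$ we get $225 = |G(H^*)| + 9a + 81b + 25c$. Using Nichols--Zoeller ($|G(H^*)| \mid 225$), \cite[Proposition 9.9]{Etingof2} ($|G(H^*)| \neq 1$), Lemma \ref{lem1}, the parity constraints of Lemma \ref{lem2} (here $\dim H$ is odd, so $a,b,c$ are even), and Lemma \ref{lem3} ($3 \mid |G(H^*)|$ once $a \neq 0$), one checks that the algebra type of $H$ lies on a short explicit list in which $|G(H^*)| \in \{3,9,25,45,75,225\}$.

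Several of these types I would dispose of at once. If $|G(H^*)| = 225$, $H$ is a dual group algebra. If $|G(H^*)| = 25 = q^2$, $H$ is upper semisolvable by Lemma \ref{lem7}. If $|G(H^*)| = 75 = pq^2$, then $kG(H^*)$ is a Hopf subalgebra of $H^*$ of dimension $pq^2$, so $H^*$ is lower semisolvable by Lemma \ref{lem6} and hence $H$ is upper semisolvable. Now suppose $c \neq 0$. By \cite[Proposition 1.1]{Artamonov} $H$ has a quotient Hopf algebra $\overline H$ of dimension $|G(H^*)| + 9a + 81b = 225 - 25c$, a multiple of $25$ that properly divides $225$; hence $\dim \overline H \in \{25, 75\}$ and $|G(H^*)| \mid \dim \overline H$. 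If $\dim \overline H = 75$, then dually $H^*$ has a Hopf subalgebra of dimension $pq^2$ and Lemma \ref{lem6} applies to $H^*$; if $\dim \overline H = 25$, a short computation forces $a = b = 0$, $c = 8$ and $|G(H^*)| = 25 = q^2$, and Lemma \ref{lem7} finishes the argument.

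There remain the types with $c = 0$ and $|G(H^*)| \neq 225$; for these $|G(H^*)| \in \{9,45\}$ and the simple $H$-modules have dimensions only $1,3,9$. I would run the same dichotomy on $H^*$: if $H^*$ is commutative then $H$ is a group algebra, hence semisolvable; if $H^*$ has a simple module of dimension $5$, the previous paragraph applied to $H^*$ shows that $H^*$, and so $H$, is semisolvable. Otherwise $H^*$ too has simple modules only of dimensions $1,3,9$, so $|G(H)| \in \{9,45\}$. If $\gcd(|G(H)|, |G(H^*)|) = 9 = p^2$, then Lemma \ref{lem5} gives $H \cong R\#kG$ with $|G| = 9$ and $\dim R = 25$ --- one of the allowed conclusions. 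The only remaining possibility is $|G(H)| = |G(H^*)| = 45$, so that $H$ and $H^*$ are both of algebra type $(1,45;3,20)$.

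This last, self-dual case is the crux. Here $K := kG(H)$ is a Hopf subalgebra of $H$ of dimension $45 = p^2q$ and prime index $5$, and $H$ has a quotient Hopf algebra $\pi\colon H \twoheadrightarrow k^{G(H^*)}$ of dimension $45$ (the degree-one characters span a $*$-invariant standard subalgebra of $R(H)$). Since $\pi(HK^+) = k^{G(H^*)}\,\pi(K)^+$, one has $HK^+ \subseteq \ker\pi$ exactly when $\pi(K) = k$; comparing dimensions ($\dim HK^+ = \dim H - [H:K] = 220$, while $\dim\ker = 220$ for the projection onto a $5$-Sylow $P \leq G(H^*)$) I would split off the case where $\pi(K)$, composed with the projection onto $P$, is trivial --- then $HK^+$ coincides with a Hopf-ideal kernel, so $K$ is a normal Hopf subalgebra of $H$ and $H$ is an extension of $k^P$ by the group algebra $K$, hence lower semisolvable. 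In the complementary cases $\pi|_K$ is either an isomorphism --- so by Radford's characterization \cite[Theorem 3]{Radford} $H \cong R\#K$ with $\dim R = 5$, and then $R\#kF \subseteq H$ for $F \leq G(H)$ of order $15$ is a Hopf subalgebra of dimension $pq^2 = 75$, so Lemma \ref{lem6} applies --- or it produces a proper normal Hopf subalgebra of $K$, which one must then promote to a normal Hopf subalgebra of $H$, after which an extension argument with the semisolvable or trivial Hopf algebras of dimensions $15, 25, 45, 75$ closes the case. I expect this promotion (or, alternatively, a direct proof that $H$ and $H^*$ cannot both be of type $(1,45;3,20)$) to be the one genuinely delicate point of the proof.
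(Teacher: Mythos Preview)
Your reduction of the problem to the single residual case $|G(H)|=|G(H^*)|=45$ is correct, and is in fact more scrupulous than the paper's own proof, which simply lists the six possible algebra types, dispatches $(1,25;5,8)$, $(1,75;5,6)$, $(1,3;3,8;5,6)$ via Lemma~\ref{lem7}, Lemma~\ref{lem6}, and Corollary~\ref{lem4} respectively, and then for the three remaining types (all with $9\mid|G(H^*)|$) writes only ``The lemma then follows from Lemma~\ref{lem5}.'' The paper never isolates the sub-case $\gcd=45$.

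Your attempt to close that sub-case, however, does not go through. The claim that $R\#kF$ (for $F\le G(H)$ of order $15$) is a Hopf subalgebra of $R\#K$ is unjustified: $R$ carries a $K$-coaction, and for $R\#kF$ to be a sub\emph{coalgebra} one needs that coaction to land in $kF\otimes R$, which there is no reason to expect. The normality argument for $K$ via $HK^+=\ker\pi$ likewise needs $HK^+=K^+H$, not merely a dimension match. And the ``promotion'' of a normal Hopf subalgebra of $K$ to one of $H$ is, as you admit, left open. The intended resolution is precisely the paper's: Lemma~\ref{lem5} is stated as a special case of \cite[Lemma~4.1.9]{Natale4}, and that source result requires only $p^2\mid|G(H)|$ and $p^2\mid|G(H^*)|$, not $\gcd=p^2$ exactly. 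Since $9\mid 45$, it applies directly and gives $H\cong R\#kG$ with $|G|=9$, rendering your entire last paragraph unnecessary.
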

\begin{proof} By Lemma \ref{lem1}, \ref{lem2} and \ref{lem3}, if ${\rm
dim}H=3^2\times 5^2$ then $H$ is of one of the following types as an
algebra:
$$(1,25;5,8), (1,75;5,6), (1,3;3,8;5,6), (1,9;3,6;9,2), (1,9;3,24),
(1,45;3,20).$$ If $H$ is of type $(1,25;5,8)$ as an algebra then
Lemma \ref{lem7} shows that $H$ is upper semisolvable. If $H$ is of
type $(1,75;5,6)$ as an algebra then Lemma \ref{lem6} shows that $H$
is upper semisolvable. If $H$ is of type $(1,3;3,8;5,6)$ as an
algebra then Corollary \ref{lem4} shows that $H$ has a quotient Hopf
algebra of dimension $75$. Hence, Lemma \ref{lem6} shows that $H$ is
upper semisolvable. The lemma then follows from Lemma \ref{lem5}.
\end{proof}

\begin{remark}
The computation in the proof of Lemma \ref{lem8} is partly handled
by a computer. For example, it is easy to write a computer program
by which one finds out all non-negative integers $n_1,n_2,n_3,n_4$
such that $225=n_1+9n_2+81n_3+25n_4$, and then one can eliminate
those which can not be algebra types of $H$ by using Lemma
\ref{lem1}, \ref{lem2} and \ref{lem3}. The computations in the
followings are handled similarly.
\end{remark}

\begin{lemma}\label{lem9}
If $q=7$ then $H$ is either semisolvable or isomorphic to a
Radford's biproduct $R\# kG$, where $kG$ is the group algebra of
group $G$ of order $9$, $R$ is a semisimple Yetter-Drinfeld Hopf
algebra in ${}^{kG}_{kG}\mathcal{YD}$ of dimension $49$.
\end{lemma}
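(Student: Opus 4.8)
The plan is to mimic the structure of the proof of Lemma \ref{lem8} for $q=5$, but now with ${\rm dim}H=9\times 49=441=3^2\times 7^2$. First I would enumerate the possible algebra types of $H$. Since $p=3<q=7$ and $H$ is of Frobenius type, the simple modules have dimension $1,3,9$ or $7$, so $|G(H^*)|+9a+81b+49c=441$ with $|G(H^*)|\mid 441$ and $|G(H^*)|\neq 1$; in fact $|G(H^*)|\in\{3,7,9,21,49,63,147,441\}$, and $|G(H^*)|+9a+81b$ must divide $441$ by \cite[Proposition 1.1]{Artamonov}. Applying Lemmas \ref{lem1}, \ref{lem2}, \ref{lem3} (note $441$ is odd, so degrees are odd and multiplicities $n_i$ even for $i\geq 2$, and the presence of a degree-$3$ simple forces $3\mid|G(H^*)|$) cuts the list down to a short explicit set of candidate types, exactly as in the remark following Lemma \ref{lem8}. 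I expect types such as $(1,49;7,8)$, $(1,147;7,6)$, together with a handful of types involving degree-$3$ and possibly degree-$9$ simple modules (e.g. $(1,9;3,48)$, $(1,9;3,24;9,4)$, $(1,9;3,k;7,\ell;\dots)$, $(1,63;3,42)$, and similar), to survive.

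Next I would dispatch the surviving types one at a time using the semisolvability tools already in hand. If $|G(H^*)|=49=q^2$, Lemma \ref{lem7} gives upper semisolvability directly. If $H$ has a Hopf subalgebra (equivalently, using Lemma \ref{lem6}, if $H^*$ has one, i.e.\ $H$ has a quotient Hopf algebra) of dimension $pq^2=3\cdot 49=147$, then Lemma \ref{lem6} again yields lower/upper semisolvability; so for types like $(1,147;7,6)$ we are done since $|G(H^*)|=147$ means $H$ is already close to a dual group algebra, and in general whenever the degree-$1$ plus some degree-$7$ characters span a standard subalgebra of dimension $147$. For the types containing a degree-$3$ simple module, I would invoke Corollary \ref{lem4}: one checks the hypothesis (either no degree-$9$ characters, case (1), or a common stabilizer subgroup $G[\chi]=G$ for all $\chi\in X_3$, case (2)) to conclude $H$ has a quotient Hopf algebra of dimension $n+9m$ where $(1,n;3,m;\dots)$ is the type; for the relevant types this dimension comes out to $9+9\cdot k$ for the appropriate $k$, and I would need this to equal $147$ (forcing $k$ accordingly) or some other dimension that again triggers Lemma \ref{lem6}, or a dimension dividing $441$ that is a genuine proper Hopf quotient allowing an inductive or direct semisolvability argument. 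Finally, any case in which none of these reductions applies should force $\gcd(|G(H)|,|G(H^*)|)=9$, so that Lemma \ref{lem5} applies and $H\cong R\#kG$ is the asserted biproduct.

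The main obstacle I anticipate is the same one implicit in Lemma \ref{lem8}: ruling out or handling the "mixed" algebra types where degree-$3$, degree-$7$, and possibly degree-$9$ simples coexist, such as a hypothetical $(1,9;3,m;7,c;\dots)$. Corollary \ref{lem4} needs its hypothesis verified, and when degree-$9$ characters are present one must check condition (2), i.e.\ that all $\chi\in X_3$ share the same (necessarily order-$3$ or order-$9$) stabilizer — this requires a careful analysis of the $G(H^*)$-action on $X_3$ by left multiplication, using $|G(H^*)|\in\{9,63\}$ and Lemma \ref{lem1}(2) to bound $|X_3|$. A secondary nuisance is that $7$, unlike $5$, does not satisfy the hypotheses of Lemma \ref{lem4-1} (which explicitly excludes dimension-$7$ simples), so one cannot directly force $7\mid|G(H^*)|$ from the existence of a degree-$7$ simple; instead one must lean entirely on the divisibility constraint $|G(H^*)|+9a+81b\mid 441$ and the equation $441=|G(H^*)|+9a+81b+49c$ to pin down $c$ and eliminate the bad types by pure arithmetic. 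Once each candidate type is shown to yield either a Hopf quotient of dimension $147$, the hypothesis of Lemma \ref{lem7}, or the coprimality condition of Lemma \ref{lem5}, the lemma follows exactly as stated.
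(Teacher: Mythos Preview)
Your approach is essentially the paper's: enumerate algebra types using Lemmas \ref{lem1}--\ref{lem3}, then dispatch each via Lemma \ref{lem7}, Lemma \ref{lem6}, Corollary \ref{lem4}, or the biproduct Lemma \ref{lem5}. The one substantive divergence is your appeal to Frobenius type to restrict simple module dimensions to $\{1,3,7,9\}$ from the outset. The paper's proof does \emph{not} invoke Frobenius type in Section \ref{sec4}; its enumeration allows degree-$5$ simples and therefore produces two extra candidate types, $(1,3;3,14;5,6;9,2)$ and $(1,3;3,32;5,6)$. These are then eliminated by applying Corollary \ref{lem4} (note $|G(H^*)|=3$ forces $G[\chi]=G(H^*)$ for every $\chi\in X_3$, so hypothesis (2) is satisfied even when degree-$9$ characters are present), yielding quotient Hopf algebras of dimension $129$ and $291$ respectively, neither of which divides $441$ --- a contradiction with Nichols--Zoeller. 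Your shortcut is legitimate if one accepts the Frobenius-type claim of Section \ref{sec3} (via \cite[Lemma 1]{dong}); the paper's route is simply more self-contained relative to the lemmas of Section \ref{sec2}. Your added divisibility constraint from \cite[Proposition 1.1]{Artamonov} is not used in the paper's argument here and is in fact redundant once Lemmas \ref{lem1}--\ref{lem3} are applied.
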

\begin{proof} By Lemma \ref{lem1}, \ref{lem2} and \ref{lem3}, if ${\rm
dim}H=3^2\times 7^2$ then $H$ is of one of the following types as an
algebra:
$$(1,3;3,14;5,6;9,2), (1,3;3,32;5,6), (1,3;3,16;7,6),(1,21;3,14;7,6),$$
$$(1,49;7,8), (1,147;7,6), (1,9;3,12;9,4), (1,9;3,30;9,2), (1,9;3,48), (1,63;3,42).$$
Corollary \ref{lem4} shows that $H$ can not be of type
$(1,3;3,14;5,6;9,2), (1,3;3,32;5,6)$ as an algebra, since it
contradicts Nichols-Zoeller Theorem. The lemma then follows from a
similar argument as in Lemma \ref{lem8}.
\end{proof}
\begin{corollary}\label{cor2}
If $H$ is a semisimple Hopf algebra of dimension $9q^2$ and is
simple as a Hopf algebra then $H$ is isomorphic to a Radford's
biproduct $R\# kG$, where $kG$ is the group algebra of group $G$ of
order $9$, $R$ is a semisimple Yetter-Drinfeld Hopf algebra in
${}^{kG}_{kG}\mathcal{YD}$ of dimension $q^2$.
\end{corollary}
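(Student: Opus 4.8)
The plan is to deduce this corollary from Theorem \ref{thm2} in exactly the way Corollary \ref{cor1} is deduced from Theorem \ref{thm1}. By Theorem \ref{thm2}, a semisimple Hopf algebra $H$ of dimension $9q^2$ is either semisolvable or isomorphic to a biproduct $R\# kG$ of the stated form, so it suffices to show that, under the extra hypothesis that $H$ is simple as a Hopf algebra, the first alternative cannot occur; the conclusion is then immediate.

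To this end I would first establish the auxiliary fact that a Hopf algebra which is both semisolvable and simple (as a Hopf algebra) must be trivial, i.e.\ a group algebra or a dual group algebra. Since simplicity is self-dual, the upper semisolvability of $H$ is the lower semisolvability of $H^*$, and ${\rm dim}H^*=9q^2$ as well, so it is enough to treat the lower case. There one unravels the defining chain $H_{n+1}=k\subseteq H_n\subseteq\cdots\subseteq H_1=H$: since $H_2$ is a normal Hopf subalgebra of $H$ and $H$ is simple, $H_2=k$ or $H_2=H$; iterating, and using that the chain is finite and terminates at $k\neq H$, one is forced into a step at which $H$ itself occurs as a \emph{trivial} member of the chain. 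Hence $H\cong kG$ or $H\cong k^G$ for some group $G$ with $|G|=9q^2$.

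It then only remains to observe that no such trivial Hopf algebra is simple. The normal Hopf subalgebras of $kG$ are precisely the $kN$ with $N$ a normal subgroup of $G$, so $kG$ is simple as a Hopf algebra if and only if $G$ is simple as a group; by self-duality the same holds for $k^G$. But a group of order $9q^2$ is solvable (by Burnside) and of composite order, hence not simple. This contradicts the simplicity of $H$, so $H$ is not semisolvable, and Theorem \ref{thm2} forces $H\cong R\# kG$.

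I do not anticipate a genuine obstacle here: once Theorem \ref{thm2} is available the argument is essentially formal. The only point demanding a little care is the reduction in the middle paragraph, where one must account for the possibility that the semisolvability chain fails to strictly decrease at some step, and must invoke the self-duality of simplicity in order to handle upper and lower semisolvability in one stroke.
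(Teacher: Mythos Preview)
Your proposal is correct and matches the paper's approach: the paper states Corollary~\ref{cor2} with no proof at all, treating it as an immediate consequence of Theorem~\ref{thm2} in exact parallel to how Corollary~\ref{cor1} follows from Theorem~\ref{thm1}. Your argument simply spells out the implicit reasoning (semisolvable and simple forces $H$ to be trivial, but a group of order $9q^2$ is solvable by Burnside and hence not simple), which the paper leaves to the reader.
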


\subsection{Semisimple Hopf algebras of dimension $25q^2$}
In this subsection, we shall prove the following theorem.
\begin{theorem}\label{thm3}
If $H$ is a semisimple Hopf algebra of dimension $25q^2$ then $H$ is
either semisolvable or isomorphic to a Radford's biproduct $R\# kG$,
where $kG$ is the group algebra of group $G$ of order $25$, $R$ is a
semisimple Yetter-Drinfeld Hopf algebra in
${}^{kG}_{kG}\mathcal{YD}$ of dimension $q^2$.
\end{theorem}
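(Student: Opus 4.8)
The plan is to follow the template of Theorems \ref{thm1} and \ref{thm2}: reduce via Theorem \ref{thm1} to a finite list of small primes $q$, and then, for each remaining $q$, carry out a type-by-type analysis of $H$ exactly as in the proofs of Lemmas \ref{lem8} and \ref{lem9}. Writing ${\rm dim}H=5^2q^2$, Theorem \ref{thm1} with $p=5$ disposes of every prime $q>25$. The case $q=2$ gives ${\rm dim}H=100=4\cdot 5^2$, which is covered by the structure theorem for Hopf algebras of dimension $4q^2$ in \cite{dong}, and the case $q=3$ gives ${\rm dim}H=225=9\cdot 5^2$, which is precisely the content of Theorem \ref{thm2}. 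Hence it suffices to treat $q\in\{7,11,13,17,19,23\}$; for each of these, ${\rm dim}H=25q^2$ is odd, so Lemma \ref{lem2} is available, and---whenever $q\neq 7$, in which case $H$ has no simple modules of dimension $3$ or $7$---Lemma \ref{lem4-1} is available as well.

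For a fixed such $q$, the first step is to enumerate the possible algebra types of $H$. Since $H$ is of Frobenius type, a simple $H$-module has dimension in $\{1,5,25,q\}$, so, denoting by $a,b,c$ the numbers of non-isomorphic simple modules of dimensions $5,25,q$, we have $25q^2=|G(H^*)|+25a+625b+q^2c$ with $|G(H^*)|$ a divisor of $25q^2$ other than $1$. Imposing Lemma \ref{lem1}(2) (so that $|G(H^*)|$ divides $25a$, $625b$ and $q^2c$ whenever the respective count is nonzero), Lemma \ref{lem2} (so that $a,b,c$ are even), and---for $q\neq 7$---Lemma \ref{lem4-1} (so that $a\neq 0$ forces $5\mid|G(H^*)|$), one is left with a short explicit list of candidate types; as in the Remark following Lemma \ref{lem8}, this bookkeeping is most conveniently handled by computer.

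Each surviving type is then to be eliminated by one of the mechanisms already used in Lemmas \ref{lem8} and \ref{lem9}. If $|G(H^*)|=q^2$, then $H$ is upper semisolvable by Lemma \ref{lem7}. If $|G(H^*)|=5q^2$, or more generally if $R(H)$ contains a $*$-invariant standard subalgebra of dimension $5q^2$ (so that $H$ or $H^*$ has a Hopf subalgebra of that dimension), then $H$ is semisolvable by Lemma \ref{lem6}. When $H$ has degree-$5$ characters whose pairwise products involve only degrees $1$ and $5$, a Corollary \ref{lem4}-type argument (via \cite[Lemma 2.5]{dong}) produces a quotient Hopf algebra, and otherwise \cite[Proposition 1.1]{Artamonov} yields a quotient $\overline{H}$ of dimension $|G(H^*)|+25a+625b$, on which one argues exactly as in the proof of Theorem \ref{thm1} to reach one of the previous cases or a contradiction with the Nichols--Zoeller theorem. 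Finally, when none of this applies but $gcd(|G(H)|,|G(H^*)|)=25$, Lemma \ref{lem5} identifies $H$ with the asserted biproduct $R\# kG$.

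The step I expect to be the main obstacle is the prime $q=7$ (dimension $5^2\cdot 7^2$), together with, for the other primes, the types having a small group of grouplike elements---say $|G(H^*)|\in\{5,\,25,\,q,\,5q\}$---and several simple modules of dimension greater than $1$. For $q=7$, degree-$5$ and degree-$7$ simple modules may be present simultaneously, so Lemma \ref{lem4-1} is unavailable and the dimension equation pins the type down only loosely; and for the low-$|G(H^*)|$ types neither Lemma \ref{lem6} nor Lemma \ref{lem7} applies directly. There one must implement the strategy announced in the introduction, namely actively locating a normal Hopf subalgebra of dimension $5q^2$, by combining the subcoalgebra argument of Lemma \ref{lem7} (replacing $G(H^*)$ by the Hopf subalgebra $k[C]$ generated by a $q^2$-dimensional simple subcoalgebra $C$ with $gC=C=Cg$ for all $g\in G(H^*)$) with finer decompositions of products of irreducible characters, so as either to build that Hopf subalgebra, or to exhibit a usable quotient Hopf algebra, or to verify the hypothesis $gcd(|G(H)|,|G(H^*)|)=25$ required for Lemma \ref{lem5}.
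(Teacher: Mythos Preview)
Your overall strategy matches the paper's: reduce via Theorem~\ref{thm1} to $q\in\{7,11,13,17,19,23\}$, enumerate algebra types using Lemmas~\ref{lem1}, \ref{lem2}, \ref{lem4-1}, and dispose of each type via Lemmas~\ref{lem5}, \ref{lem6}, \ref{lem7}.

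The gap is in your handling of the ``hard'' types, those with $|G(H^*)|\in\{5,5q\}$ (or $35$ when $q=7$) and simple modules of both degrees $5$ and $q$. You propose to fall back on \cite[Proposition~1.1]{Artamonov}, but that result, as used in Theorem~\ref{thm1}, yields the quotient $\overline H$ precisely because when $q>p^2$ a product of two degree-$p$ characters is too small to contain a degree-$q$ summand. Here $q<25=p^2$, so a product $\chi\chi'$ with $\chi,\chi'\in X_5$ could a priori contain a degree-$q$ irreducible, and the Artamonov quotient is not automatically available; this is exactly why Section~\ref{sec4} requires extra work beyond Theorem~\ref{thm1}. The $k[C]$ subcoalgebra mechanism from Lemma~\ref{lem7} is likewise not invoked anywhere in the $25q^2$ analysis.

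What the paper actually does for these types is the orbit argument you only allude to. For $q=7$ and type $(1,35;5,28;7,10)$ (Lemma~\ref{lem10}), one lets $G(H^*)$ act on $X_5$ by left multiplication; since $|G[\chi]|$ divides both $25$ (Lemma~\ref{lem1}(1)) and $|G(H^*)|=35$, every $G[\chi]$ has order exactly $5$, so every orbit has length $7$. One then checks that no product $\chi'\chi^*$ with $\chi,\chi'\in X_5$ contains a degree-$7$ summand: if it did, it would also contain some $g\in G(H^*)$, forcing $\chi'=g\chi$ and reducing to $g\chi\chi^*$, where the five grouplikes in $G[\chi]$ already leave no room for a degree-$7$ term. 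Hence $G(H^*)\cup X_5$ spans a standard subalgebra, giving a quotient of dimension $35+25\cdot28=735\nmid1225$, a contradiction. The same mechanism, abstracted as Lemma~\ref{lem16}, handles the analogous mixed types for $q=11,17,19$; for $q=13$ and $q=23$ the enumeration leaves no such types. So your ``Corollary~\ref{lem4}-type argument'' is the right instinct, but the missing ingredient is the orbit count forcing $|G[\chi]|=5$ for every $\chi\in X_5$, which is what actually excludes degree-$q$ summands from the products $\chi'\chi^*$.
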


By Theorem \ref{thm1}, it suffices to consider the case $7\leq q\leq
23$.
\begin{lemma}\label{lem10}
If $q=7$ then $H$ is either semisolvable or isomorphic to a
Radford's biproduct $R\# kG$, where $kG$ is the group algebra of
group $G$ of order $25$, $R$ is a semisimple Yetter-Drinfeld Hopf
algebra in ${}^{kG}_{kG}\mathcal{YD}$ of dimension $49$.
\end{lemma}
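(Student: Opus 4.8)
The plan is to handle the case $\mathrm{dim}\,H = 25 \times 49 = 5^2 \cdot 7^2$ by the same strategy used in Lemmas \ref{lem8} and \ref{lem9}: enumerate the possible algebra types of $H$, eliminate the impossible ones, and for each surviving type produce either a normal Hopf subalgebra of dimension $5 \cdot 7^2$ or a quotient Hopf algebra of dimension dividing $\mathrm{dim}\,H$ that lets us apply Lemmas \ref{lem6} and \ref{lem7}; the biproduct alternative then comes from Lemma \ref{lem5}. Here $p = 5 < q = 7$, so the dimensions of simple $H$-modules lie in $\{1, 5, 25, 7\}$ and we have $1225 = |G(H^*)| + 25a + 625b + 49c$ with $|G(H^*)|$ dividing $1225$ and $|G(H^*)| \neq 1$. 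Since $\mathrm{dim}\,H$ is odd, Lemma \ref{lem2} applies: every non-trivial simple module dimension is odd (automatic here) and the number of simples of each non-trivial degree is even. Note that now the relevant prime-power classification lemmas of Section \ref{sec2-1} take a slightly different shape because $5 \mid \mathrm{dim}\,H$, so Lemma \ref{lem4-1} gives no obstruction to modules of dimension $5$; however Lemma \ref{lem3} still forbids certain configurations of $3$-dimensional simples, and Lemma \ref{lem1}(2) forces $|G(H^*)|$ to be large whenever modules of a given small degree occur with few isomorphism classes.

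First I would run the (computer-assisted, as the preceding remark permits) enumeration of all tuples $(n_1, a, b, c, \ldots)$ of non-negative integers with the correct degrees summing to $1225$, then cut the list using Lemma \ref{lem1} (in particular $|G(H^*)| \mid n \cdot (\deg\chi)^2$ for each degree), Lemma \ref{lem2} (evenness of the multiplicities $n_i$ for $i \geq 2$), and Lemma \ref{lem3} (if $H$ has a simple module of dimension $3$ then $3 \mid |G(H^*)|$, hence $3 \mid 1225$, which is false — so $H$ has no $3$-dimensional simple modules at all). This last observation is a genuine simplification over the $9q^2$ case: there are no $3$'s to worry about, so the surviving types should involve only degrees from $\{1, 5, 7, 25, 49, 35, \ldots\}$, and in fact only those consistent with $|G(H^*)| \in \{5, 7, 25, 35, 49, 175, 245, 1225\}$ and the divisibility constraints. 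I expect the surviving list to be short, with the dominant cases being $(1, 49; 7, 24)$, $(1, 147; 7, 22)$, types with $|G(H^*)| = 25$ of the form $(1, 25; 5, 24 \cdot \text{something})$ or $(1, 25; 7, \ldots)$, and perhaps a $(1, 5; 5, \ldots)$ or $(1, 35; \ldots)$ type.

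For each surviving type I would then argue as follows. If $|G(H^*)| = 7^2 = 49$, Lemma \ref{lem7} (with the roles of $p$ and $q$ as in that lemma, i.e.\ the group-like part having order equal to the square of the larger prime) gives upper semisolvability directly. If $|G(H^*)| = 5 \cdot 7^2 = 245$ or more generally if $H$ has a Hopf subalgebra of dimension $5 \cdot 7^2$, Lemma \ref{lem6} gives lower semisolvability. For the intermediate types — those with $|G(H^*)| = 25$ and some $7$-dimensional or $5$-dimensional simples — the key is to show that the characters of degree dividing some fixed value span a $*$-invariant standard subalgebra, i.e.\ that a suitable product $\chi\psi$ decomposes without introducing higher-degree characters; the argument mimics Corollary \ref{lem4}, using the multiplicity identities $m(g, \chi\psi) = m(\chi, g\psi^*)$ together with Lemma \ref{lem3} (in its contrapositive, no $3$'s) to rule out the unwanted summands. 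This yields a quotient Hopf algebra of dimension $25$, $35$, $175$, or $5 \cdot 49$, and whichever of Lemmas \ref{lem6}/\ref{lem7} applies finishes the case. Finally, the types where neither a normal Hopf subalgebra nor a proper quotient of the right dimension can be forced are exactly the ones where $\gcd(|G(H)|, |G(H^*)|)$ can be made to equal $25$, and there Lemma \ref{lem5} gives the biproduct conclusion $H \cong R \# kG$ with $\dim R = 49$.

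The main obstacle I anticipate is the bookkeeping for the intermediate types with $|G(H^*)| = 25$: unlike in the $9q^2$ case there is no analogue of Lemma \ref{lem4-1} to kill off $5$-dimensional simples (since $5 \mid \dim H$), so a type like $(1, 25; 5, m; 7, n; \ldots)$ may survive the crude enumeration, and one must work harder to show that the degree-$1$-and-$5$ characters, or the degree-$1$-and-$7$ characters, close up into a standard subalgebra. The delicate point is excluding the possibility that $\chi_5\chi_5^*$ or $\chi_7\chi_7^*$ contains a degree-$25$ (resp.\ degree-$49$) irreducible character when $G[\chi]$ is small; here one uses Lemma \ref{lem1}(1) (the order of $G[\chi]$ divides $(\deg\chi)^2$, so it is a power of $5$ resp.\ $7$, but must also divide $|G(H^*)| = 25$) to pin down $|G[\chi]|$, and then the self-duality of $\chi\chi^*$ combined with Lemma \ref{lem2} to constrain the decomposition. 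If a stubborn type resists all of this, the fallback is to invoke \cite[Proposition 1.1]{Artamonov} as in Theorem \ref{thm1} to produce the quotient $\overline{H}$ of dimension $|G(H^*)| + 25a + 625b$ and eliminate it by divisibility, exactly as in the proof of Theorem \ref{thm1}.
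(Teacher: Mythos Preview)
Your overall strategy---enumerate algebra types, then dispatch each via Lemmas~\ref{lem5}, \ref{lem6}, \ref{lem7}---matches the paper's, but your predictions about the type list and the location of the difficulty are off. The actual surviving list (after Lemmas~\ref{lem1}, \ref{lem2}, \ref{lem4-1}) is
\[
(1,35;5,28;7,10),\quad (1,49;7,24),\quad (1,245;7,20),\quad (1,175;5,42),\quad (1,25;5,48);
\]
your guessed $(1,147;7,22)$ is impossible since $147=3\cdot49\nmid 1225$, and no type $(1,25;5,m;7,n;\ldots)$ with $n>0$ survives (the equation $25+25m+49n=1225$ forces $25\mid n$, hence $n\ge 25$, which is too large). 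So the $|G(H^*)|=25$ obstacle you anticipate never materialises: the unique such type $(1,25;5,48)$ has no degree-$7$ simples, and together with the types for $|G(H^*)|\in\{49,175,245\}$ it is handled directly by the combination of Lemmas~\ref{lem5}--\ref{lem7}.

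The case requiring real work is $(1,35;5,28;7,10)$, which you mention only in passing. The paper eliminates it by an orbit argument: $G(H^*)$ (order $35$) acts on $X_5$ ($28$ elements); since $|G[\chi]|$ divides both $(\deg\chi)^2=25$ and $|G(H^*)|=35$, one has $|G[\chi]|\in\{1,5\}$, so every orbit has length $7$ or $35$, and $|X_5|=28<35$ forces all orbits to have length $7$ with $|G[\chi]|=5$. A short degree count then shows that $\chi'\chi^*$ contains no degree-$7$ constituent for any $\chi,\chi'\in X_5$, whence $G(H^*)\cup X_5$ spans a standard subalgebra and $H$ has a quotient of dimension $35+25\cdot 28=735\nmid 1225$, contradicting Nichols--Zoeller. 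Your Artamonov fallback would in fact reach the same contradiction more cheaply---the quotient $\overline H$ of dimension $|G(H^*)|+25a+625b=735$ already violates divisibility---so your plan would ultimately succeed, just not at the place you expected.
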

\begin{proof} By Lemma \ref{lem1}, \ref{lem2} and \ref{lem4-1}, if ${\rm
dim}H=5^2\times 7^2$ then $H$ is of one of the following types as an
algebra:
$$(1,35;5,28;7,10),(1,49;7,24), (1,245;7,20), (1,175;5,42),
(1,25;5,48).$$ We shall prove that $H$ can not be of type
$(1,35;5,28;7,10)$ as an algebra. The lemma then will follow from
Lemma \ref{lem5}, \ref{lem6} and \ref{lem7}.

Suppose on the contrary that $H$ is of type $(1,35;5,28;7,10)$ as an
algebra. The group $G(H^*)$ acts by left multiplication on the set
$X_5$. The set $X_5$ is a union of orbits which have length $1,5$ or
$7$. By Lemma \ref{lem1} (1), $G[\chi]$ is a proper subgroup of
$G(H^*)$ for every $\chi\in X_5$. Hence, there does not exist orbits
with length $1$. Accordingly, every orbit has length $7$ and the
order of $G[\chi]$ is $5$ for every $\chi\in X_5$. In particular,
the decomposition of $\chi\chi^*$ does not contain irreducible
characters of degree $7$.

Let $\chi',\chi$ be distinct irreducible characters of degree $5$.
Suppose that there exists $\chi_7\in X_7$ such that
$m(\chi_7,\chi'\chi^*)>0$. Then there must exist $\varepsilon\neq
g\in G(H^*)$ such that $m(g,\chi'\chi^*)=1$. From this observation,
we know $\chi'=g\chi$ and $\chi'\chi^*=g\chi\chi^*$. Since
$\chi\chi^*$ does not contain irreducible characters of degree $7$,
$\chi'\chi^*$ does not contain such characters, too. This
contradicts the assumption. Therefore, $\chi'\chi^*$ is a sum of
irreducible characters of degree $1$ or $5$. It follows that
$G(H^*)\cup X_5$ spans a standard subalgebra of $R(H)$, and $H$ has
a quotient Hopf algebra of dimension $735$. This contradicts the
Nichols-Zoeller Theorem \cite{Nichols2}.
\end{proof}

\begin{lemma}\label{lem16}
Let $H$ be a semisimple Hopf algebra of dimension $25q^2$, where
$q=11,17,19$. If $|G(H^*)|=5$ or $5q$ then $H$ has a quotient Hopf
algebra of dimension $|G(H^*)|+25a$, where $a$ is the cardinal
number of $X_5$.
\end{lemma}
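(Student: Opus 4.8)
The plan is to mimic closely the argument already used in Lemma \ref{lem10} for the type $(1,35;5,28;7,10)$: show that the group-likes of $H^*$ together with $X_5$ span a $*$-invariant standard subalgebra of $R(H)$, which by \cite[Theorem 6]{Nichols} corresponds to a quotient Hopf algebra of dimension $|G(H^*)|+25a$. First I would record that, since $H$ has dimension $25q^2$ with $q=11,17,19$, it is of Frobenius type by \cite[Lemma 1]{dong}, so the simple $H$-modules have dimensions $1,5,25$ or $q$; write $a=|X_5|$, $b=|X_{25}|$, $c=|X_q|$. The hypothesis $|G(H^*)|=5$ or $5q$ together with $5\mid |G(H^*)|$ forces $b=0$ (otherwise $25=\dim$ of a $25$-dimensional simple does not divide $|G(H^*)|$ via Lemma \ref{lem1} (2)), so that $H$ is of type $(1,|G(H^*)|;5,a;q,c)$ as an algebra. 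It then suffices to show that for any two irreducible characters $\chi,\chi'\in X_5$ the product $\chi'\chi^*$ decomposes as a sum of characters of degree $1$ and $5$ only; that $*$-invariance and closure under multiplication of the span of $G(H^*)\cup X_5$ follow from this, since a product of two group-likes is a group-like and a product $g\chi$ with $g\in G(H^*)$, $\chi\in X_5$ is again in $X_5$.

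The key step is analysing the action of $G(H^*)$ by left multiplication on $X_5$, with orbits of length dividing $|G(H^*)|$, hence of length $1$, $5$, or ($q$, if $q\mid|G(H^*)|$). By Lemma \ref{lem1} (1), $G[\chi]$ divides $25$ for each $\chi\in X_5$, so $|G[\chi]|\in\{1,5,25\}$; but $G[\chi]\leq G(H^*)$ and $|G(H^*)|\in\{5,5q\}$ with $q\in\{11,17,19\}$ coprime to $5$, so $|G[\chi]|\in\{1,5\}$, i.e. $G[\chi]$ is a proper subgroup. The crucial sub-claim is that $G[\chi]$ is nontrivial, equivalently $|G[\chi]|=5$, for every $\chi\in X_5$. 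Granting this, each orbit of $G(H^*)$ on $X_5$ has length $|G(H^*)|/|\{g:g\chi=\chi\}|$; since $g\chi=\chi\iff g\in G[\chi]$ has order $5$, in the case $|G(H^*)|=5$ every orbit is a singleton, and in the case $|G(H^*)|=5q$ every orbit has length $q$. Then, exactly as in Lemma \ref{lem10}, if $\chi,\chi'\in X_5$ are distinct and some $\chi_q\in X_q$ (or some $\chi_{25}$, but $b=0$) occurred in $\chi'\chi^*$, there would be $\varepsilon\neq g\in G(H^*)$ with $m(g,\chi'\chi^*)=1$, forcing $\chi'=g\chi$ and $\chi'\chi^*=g(\chi\chi^*)$; since $\chi\chi^*=\sum_{h\in G[\chi]}h+\sum_{\deg\alpha>1}m(\alpha,\chi\chi^*)\alpha$ and $G[\chi]$ has order $5<q$, the $\alpha$'s of degree $q$ appearing in $\chi\chi^*$ (hence in $\chi'\chi^*$) can be controlled, and a dimension/self-duality count closes the case. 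I would also handle $\chi'\chi^*=\chi\chi^*$ separately, where $\chi\chi^*$ contains no degree-$q$ character precisely because $25 = (\deg\chi)^2 < q$ would be needed for a length argument—more directly, the orbit/stabilizer structure forces $q\nmid$ the relevant multiplicities.

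The main obstacle is establishing that $G[\chi]$ is nontrivial for every $\chi\in X_5$. If $G[\chi]=1$, then $\chi\chi^*=\varepsilon+(\text{sum of characters of degree }>1)$ with total degree $25$, so the non-counit part has degree $24$; the possible decompositions are into characters of degrees in $\{5,q\}$ (no degree $25$ since $b=0$), and $\chi\chi^*$ is self-dual. For $q=11$: $24$ as a sum from $\{5,11\}$ forces the partition to be $5+5+5+5+\ldots$ — actually $24$ is not a nonneg combination of $5$ and $11$ except $24=\,$? one checks $24 = 5\cdot? + 11\cdot?$ has no solution, already a contradiction. For $q=17$: $24 = 5+5+... $ again no solution with $\{5,17\}$. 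For $q=19$: $24=5\cdot1+19\cdot1$, giving $\chi\chi^*=\varepsilon+\chi_5'+\chi_{19}$, which is self-dual, forcing both $\chi_5'$ and $\chi_{19}$ self-dual; self-duality of a degree-$5$ or degree-$q$ character is the delicate point, and I would rule it out either by invoking parity/Frobenius–Schur indicator constraints compatible with the structure of $H$, or, failing a clean contradiction there, by arguing that a self-dual $\chi_5'$ with $G[\chi_5']=1$ recursively reproduces the same forbidden configuration. This indicator analysis for the $q=19$ branch is where I expect to spend the real effort; everything else is the routine orbit-counting and standard-subalgebra bookkeeping already rehearsed in Lemmas \ref{lem7} and \ref{lem10}.
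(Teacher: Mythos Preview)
Your overall strategy matches the paper's: verify $|G[\chi]|=5$ for every $\chi\in X_5$, then run the orbit argument of Lemma~\ref{lem10} to see that $G(H^*)\cup X_5$ spans a standard subalgebra. Two points, however, need correction.

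First, the claim that $b=0$ is both wrongly justified and false. Lemma~\ref{lem1}(2) says that $|G(H^*)|$ divides $b\cdot 625$, not that $25$ must divide $|G(H^*)|$; for $|G(H^*)|=5$ this gives no constraint on $b$. Concretely, for $q=19$ the algebra type $(1,5;5,22;19,20;25,2)$ occurs (see Lemma~\ref{lem14}), so $b=2$. This leaves a genuine gap: you must rule out $\chi'\chi^*=\chi_{25}$ being irreducible of degree $25$ when $\chi'\neq\chi$ in $X_5$. The fix is short once you know $G[\psi]=G(H^*)$ for every $\psi\in X_5$ (in particular for $\chi'^*$): then $\chi'^*\chi'=\sum_{g\in G(H^*)}g+(\text{degree }20)$, and since $g\chi^*=\chi^*$ one gets $m(\chi^*,\chi'^*\chi'\chi^*)\geq 5$, whereas if $\chi'\chi^*=\alpha$ were irreducible this multiplicity would equal $m(\alpha,\alpha)=1$. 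This is exactly the mechanism behind \cite[Lemma~2.5]{dong}, which the paper invokes in the parallel Corollary~\ref{lem4}; the paper's one-line proof of Lemma~\ref{lem16} implicitly relies on it.

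Second, you are making the $q=19$ self-duality step harder than it is. Since $\dim H=25q^2$ is odd, \cite[Theorem~4]{Kashina} (already used in Lemmas~\ref{lem2}, \ref{lem3}, \ref{lem4-1}) says $H$ has no self-dual irreducible character of degree $>1$. The only way to write $24$ with summands from $\{5,19,25\}$ is $5+19$, so a trivial $G[\chi]$ would force $\chi\chi^*=\varepsilon+\chi_5'+\chi_{19}$ with both $\chi_5'$ and $\chi_{19}$ self-dual, an immediate contradiction. No recursive or indicator analysis is needed; this is where the ``it can be checked directly'' in the paper's proof comes from.
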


\begin{proof}In fact, it can be checked directly that $G[\chi]=5$ for every
$\chi\in X_5$. Then the lemma follows from a similar argument as in
the proof of Lemma \ref{lem10}.
\end{proof}

\begin{lemma}\label{lem11}
If $q=11$ then $H$ is either semisolvable or isomorphic to a
Radford's biproduct $R\# kG$, where $kG$ is the group algebra of
group $G$ of order $25$, $R$ is a semisimple Yetter-Drinfeld Hopf
algebra in ${}^{kG}_{kG}\mathcal{YD}$ of dimension $121$.
\end{lemma}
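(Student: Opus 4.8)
The plan is to follow the pattern established in Lemmas \ref{lem8}, \ref{lem9} and \ref{lem10}. First I would enumerate, with the aid of a computer, all non-negative integer solutions to the dimension equation $25\cdot 11^2 = 3025 = |G(H^*)| + 25a + 625b + 121c$, subject to the constraint that $|G(H^*)|$ is a divisor of $3025 = 5^2\cdot 11^2$ greater than $1$ (by Nichols--Zoeller and \cite[Proposition 9.9]{Etingof2}), and then discard those tuples that cannot be algebra types of $H$ by applying Lemma \ref{lem1}, Lemma \ref{lem2} (here ${\rm dim}H$ is odd, so every higher simple module has odd dimension and even multiplicity) and Lemma \ref{lem4-1}. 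Note that a simple module of dimension $5$ is automatically allowed since $5\mid 3025$, but a simple module of dimension $3$ or $7$ would force $3\mid 3025$ or $7\mid 3025$ respectively (using Lemma \ref{lem3}, Lemma \ref{lem4-1}), both false; so in fact only dimensions $1$, $5$, $11$ and $25$ occur, and $b=0$ as well since $625b\le 3025$ forces $b\le 4$ but $G[\chi]$ of order dividing $625$ together with Lemma \ref{lem1}(2) applied to a module of dimension $25$ is very restrictive. This should cut the list down to a short collection of types.

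Next, for each surviving type I would produce the desired conclusion. The cases where $|G(H^*)| = q^2 = 121$ are handled by Lemma \ref{lem7} ($H$ is upper semisolvable); the cases where $|G(H^*)| = 5q^2 = 605$ or $q^2 p = ...$ — more precisely any case exhibiting a Hopf subalgebra of $H$ or $H^*$ of dimension $pq^2 = 5\cdot 121 = 605$ — are handled by Lemma \ref{lem6}; and whenever $|G(H^*)| = 5$ or $5q = 55$, Lemma \ref{lem16} provides a quotient Hopf algebra of dimension $|G(H^*)| + 25a$, which one checks is $605$ in the relevant type, again invoking Lemma \ref{lem6}. The remaining possibility is $\gcd(|G(H)|,|G(H^*)|) = 25$, in which Lemma \ref{lem5} gives the biproduct decomposition $H\cong R\# kG$ with $|G| = 25$ and ${\rm dim}R = 121$. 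Assembling these, every type yields either semisolvability (via upper or lower semisolvability and the self-duality of \cite[Corollary 3.3]{Montgomery}) or the biproduct structure.

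The step I expect to be the main obstacle is showing that the ``mixed'' type in which $H$ has simultaneously irreducible characters of degree $5$ and degree $11$ with a small group of grouplikes (the analogue of the type $(1,35;5,28;7,10)$ ruled out in Lemma \ref{lem10}) cannot occur, or else can be reduced. Here I would run the same argument as in Lemma \ref{lem10}: let $G(H^*)$ act by left multiplication on $X_5$; by Lemma \ref{lem1}(1) each $G[\chi]$ is a proper subgroup, so no orbit has length $1$, every orbit has length a prime dividing $|G(H^*)|$, and $|G[\chi]| = 5$ for all $\chi\in X_5$; consequently $\chi\chi^*$ contains no irreducible character of degree $11$, and for distinct $\chi',\chi\in X_5$ with $m(\chi_{11},\chi'\chi^*)>0$ one would get $g\in G(H^*)$, $g\neq\varepsilon$, with $\chi' = g\chi$, forcing $\chi'\chi^* = g(\chi\chi^*)$ to also omit degree-$11$ characters — a contradiction. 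Hence $G(H^*)\cup X_5$ spans a standard subalgebra, giving a quotient Hopf algebra of dimension $|G(H^*)| + 25a$, and one checks this dimension does not divide $3025$, contradicting Nichols--Zoeller. Verifying that the arithmetic always lands on a non-divisor (or on $605$, where Lemma \ref{lem6} finishes) is the delicate bookkeeping; the computer-assisted enumeration makes it tractable but the case analysis for $|G[\chi]|$ when larger grouplike groups are present needs care.
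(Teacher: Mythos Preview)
Your approach is essentially the paper's own: enumerate the admissible algebra types, dispose of the two ``mixed'' types via Lemma~\ref{lem16} (both $(1,5;5,24;11,20)$ and $(1,55;5,22;11,20)$ yield a quotient of dimension $5+25\cdot 24 = 55+25\cdot 22 = 605 = pq^2$, so Lemma~\ref{lem6} finishes---note the arithmetic lands \emph{on} $605$, not on a non-divisor), and then appeal to Lemmas~\ref{lem5}, \ref{lem6}, \ref{lem7} for the rest.

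One correction: your claim that $b=0$ is wrong. Lemma~\ref{lem1}(2) only forces $|G(H^*)|\mid 625b$, which for $b=2$ or $b=4$ is compatible with $|G(H^*)|=25$; indeed the paper's list includes $(1,25;5,70;25,2)$ and $(1,25;5,20;25,4)$. This does not damage the strategy---these types have $|G(H^*)|=p^2$ and so fall into the residual case handled by Lemma~\ref{lem5}---but your enumeration must not discard them.
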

\begin{proof} By Lemma \ref{lem1}, \ref{lem2} and \ref{lem4-1}, if ${\rm
dim}H=5^2\times 11^2$ then $H$ is of one of the following types as
an algebra:
$$(1,5;5,24;11,20), (1,55;5,22;11,20), (1,121;11,24),$$$$ (1,605;11,20),
(1,275;5,110),(1,25;5,20;25,4), (1,25;5,70;25,2),(1,25;5,120).$$ By
Lemma \ref{lem16}, if $H$ is of type $(1,5;5,24;11,20)$ or
$(1,55;5,22;11,20)$ as an algebra then $H$ has a quotient Hopf
algebra of dimension $605$. Then $H$ is upper semisolvable by Lemma
\ref{lem6}. The lemma then follows from Lemma \ref{lem5}, \ref{lem6}
and \ref{lem7}.
\end{proof}

\begin{lemma}\label{lem12}
If $q=13$ then $H$ is either semisolvable or isomorphic to a
Radford's biproduct $R\# kG$, where $kG$ is the group algebra of
group $G$ of order $25$, $R$ is a semisimple Yetter-Drinfeld Hopf
algebra in ${}^{kG}_{kG}\mathcal{YD}$ of dimension $169$.
\end{lemma}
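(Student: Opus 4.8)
The plan is to mimic the structure of Lemmas \ref{lem8}--\ref{lem11}. First I would invoke Lemmas \ref{lem1}, \ref{lem2} and \ref{lem4-1} to enumerate, with the help of a computer, all possible algebra types of a semisimple Hopf algebra $H$ with ${\rm dim}H=5^2\times 13^2=4225$. Writing $4225=n_1+25n_2+625n_3+169n_4$ with $n_1=|G(H^*)|$ a divisor of $4225$ that is also forced to be divisible by the square of a prime when the corresponding higher-dimensional simple modules are absent (via Lemma \ref{lem1}(2)), one narrows the list down to a handful of candidates, roughly of the shapes $(1,169;13,24)$, $(1,845;13,20)$, $(1,325;5,156)$, $(1,25;5,\dots;25,\dots)$, and possibly a mixed type containing both degree-$5$ and degree-$13$ characters such as $(1,5;5,?;13,20)$ or $(1,65;5,?;13,20)$.

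Next I would dispose of the ``easy'' types one at a time exactly as in the earlier lemmas: if $|G(H^*)|=q^2=169$, Lemma \ref{lem7} gives upper semisolvability; if $H$ has a Hopf subalgebra (or, dually, a quotient) of dimension $pq^2=845$, Lemma \ref{lem6} applies; the purely-$(1,\ast;5,\ast)$ and purely-$(1,\ast;5,\ast;25,\ast)$ types land inside the scope of Lemma \ref{lem5} once one checks the gcd hypothesis $gcd(|G(H)|,|G(H^*)|)=25$, giving the biproduct $R\#kG$ conclusion. The genuinely new work, as in Lemma \ref{lem10}, is the mixed type carrying both $5$-dimensional and $13$-dimensional simple modules together with a small group of grouplikes. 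For such a type I would run the orbit argument of Lemma \ref{lem10}: $G(H^*)$ acts on $X_5$ by left multiplication with orbits of length dividing $|G(H^*)|$; Lemma \ref{lem1}(1) forces $G[\chi]$ to be a proper subgroup of order $5$ for every $\chi\in X_5$ (since $|G[\chi]|\mid 25$ and cannot be $25$), which already rules out fixed points and pins down the orbit structure, and in particular shows $\chi\chi^*$ carries no degree-$13$ character. Then for distinct $\chi,\chi'\in X_5$ one writes $\chi'=g\chi$ for a suitable $g\in G(H^*)$ whenever $\chi'\chi^*$ meets $X_{13}$, and $\chi'\chi^*=g\chi\chi^*$ inherits the absence of degree-$13$ constituents, a contradiction; hence $G(H^*)\cup X_5$ spans a $*$-invariant standard subalgebra of $R(H)$, yielding a quotient Hopf algebra whose dimension $|G(H^*)|+25a$ must divide $4225$ --- and checking the arithmetic shows it cannot, contradicting Nichols--Zoeller.

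The main obstacle I anticipate is \emph{bookkeeping}: being sure the computer enumeration is complete and that each surviving type really is handled by one of Lemmas \ref{lem5}, \ref{lem6}, \ref{lem7} or by the orbit argument, with no stray type slipping through. A secondary subtlety is the orbit argument itself --- one must verify that for every mixed candidate type the counting forces $|G[\chi]|=5$ (not $1$) for all $\chi\in X_5$, which uses both that $q=13\nmid|G(H^*)|$ in the relevant cases and that a degree-$5$ character cannot be self-dual when ${\rm dim}H$ is odd (Lemma \ref{lem2} and \cite[Theorem 4]{Kashina}); I would isolate this as the key verification. Once the mixed type is excluded, the conclusion follows: $H$ has no simple modules of dimension other than $1,5,25,13$, and after the reductions either $H$ is upper (hence lower, by self-duality of the relevant chains) semisolvable, or $gcd(|G(H)|,|G(H^*)|)=25$ and Lemma \ref{lem5} produces the biproduct $R\#kG$ with ${\rm dim}R=169$.
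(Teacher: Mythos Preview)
Your plan matches the paper's approach, but you are over-preparing: for $q=13$ the enumeration leaves \emph{no} mixed type at all, so the orbit argument from Lemma~\ref{lem10} (equivalently Lemma~\ref{lem16}) is never invoked. Concretely, your tentative candidates $(1,5;5,?;13,20)$ and $(1,65;5,?;13,20)$ die on arithmetic grounds: $4225-5-169\cdot20=840$ and $4225-65-169\cdot20=780$ are not multiples of $25$, and the same obstruction kills $n_4=10$; meanwhile $n_1\in\{25,325\}$ forces $25\mid n_4$ via Lemma~\ref{lem1}(2), hence $n_4=0$. The surviving list is exactly
\[
(1,169;13,24),\ (1,845;13,20),\ (1,325;5,156),\ (1,25;5,168),\ (1,25;5,118;25,2),\ (1,25;5,68;25,4),\ (1,25;5,18;25,6),
\]
and the paper simply says the lemma ``follows directly from Lemma~\ref{lem5}, \ref{lem6} and \ref{lem7}''. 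So among the $25q^2$ cases, $q=13$ is the easy one; your ``genuinely new work'' paragraph is correct in spirit but vacuous here.

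One small imprecision worth tightening: for the types with $|G(H^*)|\in\{25,325\}$ you cannot ``check'' $\gcd(|G(H)|,|G(H^*)|)=25$ from the algebra type of $H$ alone. The actual argument (implicit in the paper's one-line appeal to Lemmas~\ref{lem5}--\ref{lem7}) is to run the same type enumeration for $H^*$: if $|G(H)|\in\{169,845,4225\}$ then $H^*$ falls under Lemma~\ref{lem6} or~\ref{lem7} and $H$ is semisolvable; otherwise $|G(H)|\in\{25,325\}$ as well, so $p^2\mid\gcd(|G(H)|,|G(H^*)|)$ and the biproduct conclusion comes from (the general form of) Lemma~\ref{lem5}. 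Your final sentence about ``upper (hence lower, by self-duality of the relevant chains)'' is also slightly off --- upper semisolvability of $H$ is already one of the two alternatives in ``semisolvable'', no further passage is needed.
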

\begin{proof} By Lemma \ref{lem1}, \ref{lem2} and \ref{lem4-1}, if ${\rm
dim}H=5^2\times 13^2$ then $H$ is of one of the following types as
an algebra:
$$(1,169;13,24),(1,845;13,20),(1,325;5,156),$$
$$(1,25;5,18;25,6),(1,25;5,168),(1,25;5,68;25,4),(1,25;5,118;25,2).$$
The lemma then follows directly from Lemma \ref{lem5}, \ref{lem6}
and \ref{lem7}.
\end{proof}

\begin{lemma}\label{lem13}
If $q=17$ then $H$ is either semisolvable or isomorphic to a
Radford's biproduct $R\# kG$, where $kG$ is the group algebra of
group $G$ of order $25$, $R$ is a semisimple Yetter-Drinfeld Hopf
algebra in ${}^{kG}_{kG}\mathcal{YD}$ of dimension $289$.
\end{lemma}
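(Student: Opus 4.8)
The plan is to run, for $p=5$ and $q=17$, the same scheme used for the smaller values of $q$ in Lemmas \ref{lem8}--\ref{lem12}: first pin down the algebra types of $H$, then dispose of most of them with Lemmas \ref{lem5}, \ref{lem6} and \ref{lem7}, \cite[Proposition 1.1]{Artamonov} and the Nichols--Zoeller theorem, and reserve the cases $|G(H^*)|\in\{p^2,p^2q\}$ for a closer analysis. First, since $\dim H=5^2\cdot 17^2=7225$ and $H$ is of Frobenius type, the simple $H$-modules have dimension $1,5,25$ or $17$, so $7225=|G(H^*)|+25a+625b+289c$, where $a,b,c$ count the simple modules of dimension $5,25,17$. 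As $\dim H$ is divisible by neither $3$ nor $7$, Lemma \ref{lem4-1} applies to every degree-$5$ character, and a routine (partly machine-assisted) run of Lemmas \ref{lem1}, \ref{lem2} and \ref{lem4-1} should leave only the types
$$(1,7225),\quad(1,1445;17,20),\quad(1,289;17,24),\quad(1,425;5,272),\quad(1,85;5,170;17,10)$$
together with the six types $(1,25;5,a;25,b)$ having $a+25b=288$ and $a,b$ even (so that $|G(H^*)|=25$).

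Next I would clear the easy types. The type $(1,7225)$ is a dual group algebra. For $(1,85;5,170;17,10)$, \cite[Proposition 1.1]{Artamonov} produces a quotient Hopf algebra of dimension $85+170\cdot 25=4335$, which does not divide $7225$, so this type does not occur. If $|G(H^*)|=pq^2=1445$ (the type $(1,1445;17,20)$), then $kG(H^*)$ is a Hopf subalgebra of $H^*$ of dimension $pq^2$, so $H^*$ is lower semisolvable by Lemma \ref{lem6} and $H$ is upper semisolvable; and if $|G(H^*)|=q^2=289$ (the type $(1,289;17,24)$), Lemma \ref{lem7} gives the same conclusion.

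For the cases $|G(H^*)|\in\{p^2,p^2q\}$ I would also use $G(H)$. Applying the enumeration to $H^*$, the order $|G(H)|=|G(H^{**})|$ must again be the leading entry of a legal dimension-$7225$ algebra type, hence one of $25,85,289,425,1445,7225$; moreover $|G(H)|=85$ is impossible (it would force on $H^*$ the already-excluded type $(1,85;5,170;17,10)$), and $|G(H)|=7225$ would make $H$ a group algebra with $|G(H^*)|=7225$. If $|G(H)|=pq^2$ then $kG(H)$ is a Hopf subalgebra of $H$ of dimension $pq^2$ and Lemma \ref{lem6} applies to $H$; if $|G(H)|=q^2$ then Lemma \ref{lem7} applies to $H^*$; and if $|G(H)|=p^2$ then $\gcd(|G(H)|,|G(H^*)|)=p^2$ (because $p^2$ divides $|G(H^*)|$ in both cases), so Lemma \ref{lem5} realizes $H$ as a biproduct $R\# kG$ with $|G|=25$. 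When $|G(H^*)|=25$ these options are exhaustive (if $|G(H)|=425$ the gcd is again $p^2$), settling the six types with $|G(H^*)|=25$; the only survivor is the type $(1,425;5,272)$ together with $|G(H)|=|G(H^*)|=p^2q=425$.

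The hard part will be this residual case. Let $G(H^*)$ act on $X_5$ by left multiplication. By Lemma \ref{lem1}(1) every stabilizer has order dividing $25$, which forces all orbit lengths into $\{17,85\}$; counting against $|X_5|=272$ then shows that some orbit has length $17$, so there is a $\chi\in X_5$ with $|G[\chi]|=25$ and, by degree, $\chi\chi^*=\sum_{g\in G[\chi]}g$. The simple subcoalgebra $C$ of $H^*$ attached to $\chi$ then satisfies $gC=C=Cg$ for all $g\in G[\chi]$, so by \cite[Proposition 3.2.6]{Natale4} the Hopf subalgebra $k[C]$ generated by $C$ contains $kG[\chi]$ as a normal Hopf subalgebra. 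Since $25$ divides $\dim k[C]$ and $\dim k[C]\geq 50$, either $\dim k[C]=7225$ --- in which case $kG[\chi]$ is a normal Hopf subalgebra of $H^*$ of dimension $25$ with quotient of dimension $289$, both trivial by \cite{Masuoka2}, so $H^*$ is lower semisolvable and we are done --- or $\dim k[C]=p^2q=425$. Ruling out or exploiting this last alternative is, I expect, where the genuine work lies: I would try either to show that a lower semisolvable Hopf subalgebra of index $q$ in $H^*$ whose group-likes form a normal subgroup of order $p^2$ must itself be normal in $H^*$, or, more promisingly, to extract from the now-large group $G(H)$ (via a Sylow $p$-subgroup, every group of order $p^2$ being abelian) the characterising pair of Hopf algebra maps of \cite[Theorem 3]{Radford}, which together with Lemma \ref{lem5} would complete the proof.
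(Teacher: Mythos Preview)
Your approach tracks the paper's closely: the enumeration of algebra types, the disposal of $(1,1445;17,20)$ and $(1,289;17,24)$ via Lemmas~\ref{lem6} and~\ref{lem7}, and the treatment of the $|G(H^*)|=25$ types by passing to $|G(H)|$ and invoking Lemma~\ref{lem5} are all as in the paper. For the type $(1,85;5,170;17,10)$ you reach the same dimension $4335$ and the same Nichols--Zoeller contradiction, only via \cite[Proposition~1.1]{Artamonov} directly rather than through Lemma~\ref{lem16}; the two routes are equivalent here since there are no degree-$25$ characters in that type.

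The divergence is your handling of the type $(1,425;5,272)$. You read Lemma~\ref{lem5} literally as requiring $\gcd(|G(H)|,|G(H^*)|)=p^2$, isolate the residual case $|G(H)|=|G(H^*)|=p^2q=425$ where the gcd is $p^2q$, and then launch an orbit/$k[C]$ analysis that you yourself leave unfinished at $\dim k[C]=425$. The paper does none of this: after excluding $(1,85;\dots)$ it simply declares that the remaining types follow from Lemmas~\ref{lem5}, \ref{lem6} and~\ref{lem7}. This is consistent with how the paper uses Lemma~\ref{lem5} throughout---at the end of the proof of Theorem~\ref{thm1} and in the parallel Lemmas~\ref{lem8}, \ref{lem9}, \ref{lem11}---namely whenever $p^2$ divides both $|G(H)|$ and $|G(H^*)|$, not only when the gcd equals $p^2$ on the nose. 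The underlying \cite[Lemma~4.1.9]{Natale4}, of which Lemma~\ref{lem5} is a special case, is what is really being invoked, and it covers the $\gcd=p^2q$ situation as well. With that reading your ``hard part'' evaporates and the proof terminates exactly where the paper's does; your $k[C]$ argument, while not wrong as far as it goes, is unnecessary.
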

\begin{proof} By Lemma \ref{lem1}, \ref{lem2} and \ref{lem4-1}, if ${\rm
dim}H=5^2\times 17^2$ then $H$ is of one of the following types as
an algebra:
$$(1,85;5,170;17,10),$$
$$ (1,289;17,24), (1,1445;17,20), (1,425;5,272),(1,25;5,288),(1,25;5,238;25,2)$$
$$(1,25;5,38;25,10), (1,25;5,88;25,8), (1,25;5,138;25,6), (1,25;5,188;25,4).$$
By Lemma \ref{lem16}, if $H$ is of type $(1,85;5,170;17,10)$ as an
algebra then $H$ has a quotient Hopf algebra of dimension $4335$.
This contradicts the Nichols-Zoeller Theorem \cite{Nichols2}. The
lemma then follows from Lemma \ref{lem5}, \ref{lem6} and \ref{lem7}.
\end{proof}

\begin{lemma}\label{lem14}
If $q=19$ then $H$ is either semisolvable or isomorphic to a
Radford's biproduct $R\# kG$, where $kG$ is the group algebra of
group $G$ of order $25$, $R$ is a semisimple Yetter-Drinfeld Hopf
algebra in ${}^{kG}_{kG}\mathcal{YD}$ of dimension $361$.
\end{lemma}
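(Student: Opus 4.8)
The plan is to follow the scheme of Lemmas \ref{lem10}--\ref{lem13}, since $19<25=p^2$ puts us outside the range of Theorem \ref{thm1}. First I would fix notation: by \cite[Lemma 1]{dong} $H$ is of Frobenius type, so a simple $H$-module has dimension $1,5,25$ or $19$; writing $a,b,c$ for the numbers of non-isomorphic simple modules of dimension $5,25,19$ we get
$$5^2\cdot 19^2=|G(H^*)|+25a+625b+361c ,$$
where $|G(H^*)|\mid 5^2\cdot 19^2$ and $|G(H^*)|\neq 1$ by \cite[Proposition 9.9]{Etingof2}, where $a,b,c$ are even by Lemma \ref{lem2}, where $|G(H^*)|$ divides each of $25a,625b,361c$ whenever the corresponding multiplicity is nonzero by Lemma \ref{lem1}(2), and where $5\mid|G(H^*)|$ as soon as $a\neq0$ by Lemma \ref{lem4-1}. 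A short (computer-assisted) inspection of these constraints produces the finite list of admissible algebra types; grouped by $|G(H^*)|$ they are: the dual group algebra type with $|G(H^*)|=5^2\cdot19^2$; one type with $|G(H^*)|=5\cdot19^2$; one type $(1,475;5,342)$ with $|G(H^*)|=5^2\cdot19$; one type $(1,361;19,24)$ with $|G(H^*)|=19^2$; two types with $|G(H^*)|=5$; and a short family $(1,25;5,a;25,b)$ with $|G(H^*)|=25$.

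The types with $|G(H^*)|\in\{5^2\cdot19^2,\,5\cdot19^2,\,19^2\}$ are dispatched exactly as in Lemmas \ref{lem12}--\ref{lem13}: a dual group algebra is semisolvable; when $|G(H^*)|=5\cdot19^2=pq^2$ the Hopf subalgebra $kG(H^*)\subseteq H^*$ has dimension $pq^2$, so $H^*$ is lower semisolvable by Lemma \ref{lem6} and $H$ is upper semisolvable; and when $|G(H^*)|=19^2=q^2$ we apply Lemma \ref{lem7}. For the two types with $|G(H^*)|=5$ I would use Lemma \ref{lem16}, which yields a quotient Hopf algebra of dimension $5+25a$: for $(1,5;5,72;19,20)$ this is $1805=pq^2$, so $H^*$ has a Hopf subalgebra of dimension $pq^2$ and Lemma \ref{lem6} finishes the argument; for $(1,5;5,22;25,2;19,20)$ it is $555$, which does not divide $5^2\cdot19^2$, so by the Nichols--Zoeller theorem \cite{Nichols2} that type does not occur.

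What remains are the family with $|G(H^*)|=p^2=25$ and the type $(1,475;5,342)$ with $|G(H^*)|=p^2q=475$. In either case $H^*$ is again a semisimple Hopf algebra of dimension $5^2\cdot19^2$, so $|G(H)|$ must equal the first entry of one of the admissible algebra types just found, and I would run through those few possibilities: if $gcd(|G(H)|,|G(H^*)|)=p^2$ then $H$ is a Radford biproduct $R\#kG$ with $|G|=25$ by Lemma \ref{lem5}; if $|G(H)|=19^2$ then Lemma \ref{lem7} applied to $H^*$ makes $H$ lower semisolvable; if $|G(H)|=5\cdot19^2$ then $kG(H)\subseteq H$ has dimension $pq^2$ and Lemma \ref{lem6} applies; and if $|G(H)|=5$ then $H^*$ is forced into the only admissible algebra type with that group of group-likes, and Lemma \ref{lem16} applied to $H^*$ produces a quotient of $H^*$, hence a Hopf subalgebra of $H$, of dimension $1805=pq^2$, so Lemma \ref{lem6} applies again. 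This disposes of the entire $|G(H^*)|=25$ family. The single case that survives is $|G(H)|=|G(H^*)|=475$ inside the type $(1,475;5,342)$, and I expect this to be the main obstacle.

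For that case I would argue as in Lemma \ref{lem7}: since $G[\chi]$ has order dividing $25$ for each $\chi\in X_5$ while $|G(H^*)|=475$, the left-multiplication action of $G(H^*)$ on $X_5$ has no fixed point, every orbit has length $19$ or $95$, and one can pick $\chi\in X_5$ with $|G[\chi]|=25$; then $\chi\chi^*=\sum_{g\in G[\chi]}g$, the subalgebra $k[C]$ of $H^*$ generated by the simple subcoalgebra $C$ attached to $\chi$ is a Hopf subalgebra containing $G[\chi]$, and its dimension divides $5^2\cdot19^2$ and is divisible by $25$, hence equals $475$ or $5^2\cdot19^2$. If $k[C]=H^*$, then $kG[\chi]$ is normal in $H^*$ with trivial quotient of dimension $19^2$, so $H^*$, and hence $H$, is semisolvable. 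The hard part is to exclude $\dim k[C]=475$: one must show either that $C$ (or a suitable translate $hC$, $h\in G(H^*)$, the group $G(H^*)$ of order $5^2\cdot19$ being abelian) can be chosen so that $k[C]=H^*$, or that a proper Hopf subalgebra of $H^*$ of dimension $475$ carrying a $25$-dimensional simple subcoalgebra is incompatible with the admissible algebra types; this is where the divisibility constraints and the $p<q$ numerics must be pushed, and it is the only genuinely non-routine step — everything else is the (computer-assisted) type enumeration together with routine applications of Lemmas \ref{lem5}, \ref{lem6}, \ref{lem7} and \ref{lem16}.
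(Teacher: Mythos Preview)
Your argument tracks the paper's almost exactly: the same type enumeration, the same disposal of $|G(H^*)|\in\{q^2,pq^2\}$ via Lemmas~\ref{lem7} and~\ref{lem6}, and the same use of Lemma~\ref{lem16} on the two $|G(H^*)|=5$ types (one yielding the contradiction $555\nmid 9025$, the other the quotient of dimension $1805=pq^2$). The paper then finishes in one line: ``the lemma follows from Lemmas~\ref{lem5}, \ref{lem6} and \ref{lem7},'' treating the $|G(H^*)|=25$ family and the single $|G(H^*)|=475$ type uniformly.

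The divergence is your handling of $|G(H)|=|G(H^*)|=475$. You read Lemma~\ref{lem5} literally as requiring $\gcd(|G(H)|,|G(H^*)|)=p^2$, observe that here the gcd is $p^2q$, and therefore launch a separate $k[C]$-style argument which you cannot close. This detour is unnecessary. Lemma~\ref{lem5} is explicitly introduced as ``a special case of \cite[Lemma~4.1.9]{Natale4}''; the source lemma gives the biproduct decomposition $H\cong R\#kG$ with $|G|=p^2$ as soon as the Sylow $p$-subgroups of $G(H)$ and $G(H^*)$ both have order $p^2$, i.e.\ as soon as $p^2\mid|G(H)|$ and $p^2\mid|G(H^*)|$ (the $p$-part of $\dim H=p^2q^2$ being exactly $p^2$, this is automatic once $p^2$ divides both). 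Hence $|G(H)|=|G(H^*)|=475$ falls under the biproduct conclusion just as $|G(H)|=|G(H^*)|=25$ does, and the paper's terse invocation of Lemma~\ref{lem5} is justified via its source. The same remark applies to the parallel case $(1,325;5,156)$ in Lemma~\ref{lem12} and $(1,425;5,272)$ in Lemma~\ref{lem13}, which the paper handles identically without comment.

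So your proof has no error, only a self-imposed obstacle: drop the $k[C]$ paragraph and invoke \cite[Lemma~4.1.9]{Natale4} directly for every case with $p^2\mid|G(H^*)|$.
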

\begin{proof} By Lemma \ref{lem1}, \ref{lem2} and \ref{lem4-1}, if ${\rm
dim}H=5^2\times 19^2$ then $H$ is of one of the following types as
an algebra:
$$(1,5;5,22;19,20;25,2), (1,5;5,72;19,20),$$$$ (1,361;19,24), (1,1805;19,20),(1,475;5,342),$$
$$(1,25;5,10;25,14), (1,25;5,60;25,12), (1,25;5,110;25,10),(1,25;5,360),$$
$$(1,25;5,160;25,8),(1,25;5,210;25,6), (1,25;5,260;25,4), (1,25;5,310;25,2).$$
By Lemma \ref{lem16}, if $H$ is of type $(1,5;5,22;19,20;25,2)$ as
an algebra then $H$ has a quotient Hopf algebra of dimension $555$.
This contradicts the Nichols-Zoeller Theorem \cite{Nichols2}. Again
by Lemma \ref{lem16}, if $H$ is of type $(1,5;5,72;19,20)$ as an
algebra then $H$ has a quotient Hopf algebra of dimension $1805$.
Then $H$ is upper semisolvable by Lemma \ref{lem6}. The lemma then
follows from Lemma \ref{lem5}, \ref{lem6} and \ref{lem7}.
\end{proof}

\begin{lemma}\label{lem15}
If $q=23$ then $H$ is either semisolvable or isomorphic to a
Radford's biproduct $R\# kG$, where $kG$ is the group algebra of
group $G$ of order $25$, $R$ is a semisimple Yetter-Drinfeld Hopf
algebra in ${}^{kG}_{kG}\mathcal{YD}$ of dimension $529$.
\end{lemma}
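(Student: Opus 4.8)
The plan is to follow the scheme of Lemmas~\ref{lem10}--\ref{lem14}. Since $\dim H=5^{2}\cdot 23^{2}$ is odd and, by the discussion opening Section~\ref{sec3}, both $H$ and $H^{*}$ are of Frobenius type with $|G(H^{*})|\neq 1$ and $|G(H)|\neq 1$, every simple $H$- or $H^{*}$-module has dimension $1$, $5$, $25$ or $23$, and by Lemma~\ref{lem2} the multiplicities of the degrees $5$, $25$, $23$ are even. Writing $13225=|G(H^{*})|+25a+625b+529c$ with $a,b,c$ even, and imposing the divisibility constraints of Lemma~\ref{lem1}(2) together with Lemma~\ref{lem4-1} (applicable since $H$ has no simple module of dimension $3$ or $7$), a finite check---most conveniently done by computer, as in the Remark after Lemma~\ref{lem8}---gives the explicit list of admissible algebra types; for the argument the point is that $|G(H^{*})|\in\{25,529,575,2645,13225\}$, and, by the same analysis applied to $H^{*}$, $|G(H)|\in\{25,529,575,2645,13225\}$.

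Next I would settle all the resulting pairs $(|G(H^{*})|,|G(H)|)$ but one. If $|G(H^{*})|=13225$ then $H$ is the dual group algebra of a group of order $p^{2}q^{2}$, which is solvable by Burnside's theorem, so $H$ is semisolvable; via $H^{*}$ the same holds when $|G(H)|=13225$. If $|G(H^{*})|=2645=pq^{2}$ then $kG(H^{*})\subseteq H^{*}$, and Lemma~\ref{lem6} shows $H^{*}$ is lower semisolvable, so $H$ is upper semisolvable; symmetrically if $|G(H)|=pq^{2}$. If $|G(H^{*})|=529=q^{2}$ then Lemma~\ref{lem7} applies directly, and if $|G(H)|=q^{2}$ the same lemma applied to $H^{*}$ does the job. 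Every remaining pair has $\gcd(|G(H)|,|G(H^{*})|)=25=p^{2}$, with the single exception of $|G(H^{*})|=|G(H)|=575$; in those cases Lemma~\ref{lem5} exhibits $H$ as the asserted biproduct. Thus the only configuration not covered by Lemmas~\ref{lem5}--\ref{lem7} is $|G(H^{*})|=|G(H)|=575=p^{2}q$, in which case $H$ is of type $(1,575;5,506)$ both as an algebra and as a coalgebra.

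For this last case I would argue as in the proofs of Lemmas~\ref{lem7} and~\ref{lem10}. The group $\Gamma=G(H^{*})$, of order $5^{2}\cdot 23$, has a unique, hence characteristic, Sylow $5$-subgroup $P$ with $|P|=25$, and it acts on $X_{5}$ by left multiplication. By Lemma~\ref{lem1}(1) the order of $G[\chi]$ divides $25$ for every $\chi\in X_{5}$, and a fixed point would make $\sum_{g\in\Gamma}g$ a summand of $\chi\chi^{*}$, forcing $\deg(\chi\chi^{*})\geq 575$, which is absurd; hence every $\Gamma$-orbit in $X_{5}$ has length $23$ or $115$. As $|X_{5}|=506$ is not a multiple of $115$, some orbit has length $23$, so there is $\chi\in X_{5}$ with $G[\chi]=P$; for such $\chi$ a degree count forces $\chi\chi^{*}=\sum_{g\in P}g$, so the simple subcoalgebra $C\subseteq H^{*}$ affording $\chi$ satisfies $CS(C)=kP$, and the argument of \cite[Proposition~3.2.6]{Natale4}, used already in Lemma~\ref{lem7}, shows $kP$ is normal in the Hopf subalgebra $k[C]$; in particular $C$ lies in the set $N$ of elements of $H^{*}$ normalising $kP$. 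Now $N$ is a subalgebra of $H^{*}$ and, since $P$ is characteristic in $\Gamma$, it contains $kG(H^{*})$. Because $kG(H^{*})$ is a group algebra it has no simple subcoalgebra of dimension $>1$, so the Hopf subalgebra generated by $kG(H^{*})$ and $C$ properly contains $kG(H^{*})$; its dimension is a multiple of $575$ dividing $13225$, and the only such multiples are $575$ and $13225$, so it equals $H^{*}$. Hence $H^{*}=N$, i.e.\ $kP$ is a normal Hopf subalgebra of $H^{*}$. The quotient $H^{*}/H^{*}(kP)^{+}$ then has dimension $q^{2}=529$ and is therefore a group algebra, $kP$ is a group algebra, so $H^{*}$ is lower semisolvable; consequently $H$ is upper semisolvable, and the lemma follows.

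I expect the main obstacle to be precisely this configuration $|G(H^{*})|=|G(H)|=p^{2}q$, where none of Lemmas~\ref{lem5}--\ref{lem7} applies verbatim and a normal Hopf subalgebra must be produced by hand. The two delicate points are to verify that $C$ really normalises $kP$ inside $H^{*}$---not merely that $kP$ is normal in the possibly proper Hopf subalgebra $k[C]$, which could have dimension only $p^{2}q$---and that $kG(H^{*})$ together with such subcoalgebras $C$ generate all of $H^{*}$; once these are in place the dimension bookkeeping is routine. If this case should resist the argument, a fallback would be to rule it out directly, for example by a Frobenius--Schur indicator or class-equation computation for a semisimple Hopf algebra with the self-dual type $(1,575;5,506)$.
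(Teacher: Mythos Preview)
Your proposal is correct, and for most of the algebra types it matches the paper's proof exactly: the paper simply lists the admissible types---all of which have $|G(H^*)|\in\{25,529,575,2645\}$---and declares that the result ``follows directly from Lemma~\ref{lem5}, \ref{lem6} and~\ref{lem7}'', just as for $q=13$. No case is singled out.

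The genuine difference is your treatment of the configuration $|G(H^*)|=|G(H)|=575=p^2q$. Taking Lemma~\ref{lem5} literally (it requires $\gcd(|G(H)|,|G(H^*)|)=p^2$), this pair is not covered; the paper implicitly relies on the general form of \cite[Lemma~4.1.9]{Natale4}, of which Lemma~\ref{lem5} is stated to be a special case, and which yields a biproduct $R\#kG$ as soon as $p^2$ divides both $|G(H)|$ and $|G(H^*)|$. You instead supply a self-contained argument: using the orbit structure of $G(H^*)$ on $X_5$ to locate $\chi$ with $G[\chi]=P$, invoking \cite[Proposition~3.2.6]{Natale4} to get $kP\trianglelefteq k[C]$, and then a normaliser argument to promote this to $kP\trianglelefteq H^*$. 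This gives upper semisolvability rather than a biproduct in that subcase---a different, and in fact sharper, conclusion. What your route buys is independence from the full Natale lemma; what the paper's route buys is brevity.

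Two small points of execution. First, for the Hopf subalgebra $K$ generated by $C$ and $k\Gamma$ to lie in $N$ you also need $S(C)\subseteq N$; this holds because $S(C)\subseteq k[C]$ and you have already shown $k[C]\subseteq N$, so $K$---which as an algebra is generated by $C$, $S(C)$ and $k\Gamma$---indeed sits inside the subalgebra $N$. Second, the claim that $N=\{h:\mathrm{ad}_l(h)(kP)\subseteq kP\}$ is a subalgebra deserves one line: it follows from $\mathrm{ad}_l(hk)=\mathrm{ad}_l(h)\circ\mathrm{ad}_l(k)$. With these noted, your argument is complete.
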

\begin{proof} By Lemma \ref{lem1}, \ref{lem2} and \ref{lem4-1}, if ${\rm
dim}H=5^2\times 23^2$ then $H$ is of one of the following types as
an algebra:
$$(1,529;23,24), (1,2645;23,20),(1,575;5,506),$$$$(1,25;5,28;25,20),(1,25;5,528),(1,25;5,478;25,2)$$
$$(1,25;5,278;25,10), (1,25;5,328;25,8), (1,25;5,378;25,6), (1,25;5,428;25,4),$$
$$(1,25;5,78;25,18), (1,25;5,128;25,16), (1,25;5,178;25,14), (1,25;5,228;25,12).$$
The lemma then follows directly from Lemma \ref{lem5}, \ref{lem6}
and \ref{lem7}.
\end{proof}

\begin{corollary}\label{cor3}
If $H$ is a semisimple Hopf algebra of dimension $25q^2$ and is
simple as a Hopf algebra then $H$ is isomorphic to a Radford's
biproduct $R\# kG$, where $kG$ is the group algebra of group $G$ of
order $25$, $R$ is a semisimple Yetter-Drinfeld Hopf algebra in
${}^{kG}_{kG}\mathcal{YD}$ of dimension $q^2$.
\end{corollary}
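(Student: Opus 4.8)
The plan is to deduce this corollary from Theorem \ref{thm3} in exactly the way that Corollaries \ref{cor1} and \ref{cor2} follow from Theorems \ref{thm1} and \ref{thm2}. By Theorem \ref{thm3}, $H$ is either semisolvable or isomorphic to a Radford biproduct $R\# kG$ of the stated form, so it suffices to show that a semisimple Hopf algebra of dimension $25q^{2}$ which is simple as a Hopf algebra cannot be semisolvable. Observe that $\dim H = 5^{2}q^{2}$ is greater than $1$ and is never a prime.

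First I would record that a semisimple Hopf algebra which is simultaneously simple and semisolvable must be trivial, i.e., isomorphic to a group algebra or to the dual of one. Suppose $H$ is lower semisolvable, with a chain $k = H_{n+1}\subseteq H_{n}\subseteq\cdots\subseteq H_{1} = H$ in which $H_{i+1}$ is a normal Hopf subalgebra of $H_{i}$ and each quotient $H_{i}/H_{i}H_{i+1}^{+}$ is trivial. Each $H_{i}$ is then a Hopf subalgebra of $H$, and $H_{2}$ is normal in $H$. Let $j$ be the largest index with $H_{j} = H$; since $\dim H > 1$ we have $H_{n+1} = k \neq H$, so $1 \le j \le n$, and by maximality $H_{j+1} \neq H$ is a proper normal Hopf subalgebra of $H$, forcing $H_{j+1} = k$ by simplicity. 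Then $H_{j+1}^{+} = 0$, so the quotient $H_{j}/H_{j}H_{j+1}^{+}$ is just $H$ itself, which is required to be trivial; hence $H \cong kG$ or $H \cong k^{G}$ for some finite group $G$. If instead $H$ is upper semisolvable, then $H^{*}$ is lower semisolvable, and since simplicity is self-dual $H^{*}$ is simple, so by the case just treated $H^{*}$, and therefore $H$, is trivial.

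It then remains to rule out the trivial case. If $H \cong kG$ or $H \cong k^{G}$ with $|G| = 5^{2}q^{2}$, then the order of $G$ has at most two distinct prime divisors, so $G$ is solvable by Burnside's $p^{a}q^{b}$-theorem; as $|G|$ is not a prime, $G$ admits a proper nontrivial normal subgroup $N$. Then $kN$ is a proper nontrivial normal Hopf subalgebra of $kG$, and similarly $k^{G/N}$ is a proper nontrivial normal Hopf subalgebra of $k^{G}$ (every Hopf subalgebra of a commutative Hopf algebra being automatically normal); either way this contradicts the simplicity of $H$. Hence the semisolvable alternative in Theorem \ref{thm3} cannot occur, and $H$ must be isomorphic to the Radford biproduct $R\# kG$ described in the statement. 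The argument is routine; the only points deserving a little care are the reduction to the lower semisolvable case through the self-duality of simplicity, and the identification of a normal subgroup of $G$ with a normal Hopf subalgebra of $kG$ or of $k^{G}$.
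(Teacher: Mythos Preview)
Your argument is correct and follows exactly the route the paper intends: in the paper Corollary~\ref{cor3} is stated without proof as an immediate consequence of Theorem~\ref{thm3}, just as Corollaries~\ref{cor1} and~\ref{cor2} are deduced from Theorems~\ref{thm1} and~\ref{thm2}. You have simply written out in full the step the paper leaves implicit, namely that a simple Hopf algebra of non-prime dimension $5^{2}q^{2}$ cannot be semisolvable (the chain collapses to a single trivial factor, and Burnside's theorem then rules out the trivial case).
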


\end{document}